\def\ge{\geqslant}
\def\leq{\leqslant}
\def\geq{\geqslant}
\newcommand{\refcheckize}[1]{%
  \expandafter\let\csname @@\string#1\endcsname#1%
  \expandafter\DeclareRobustCommand\csname relax\string#1\endcsname[1]{%
    \csname @@\string#1\endcsname{##1}\@for\@temp:=##1\do{\wrtusdrf{\@temp}\wrtusdrf{{\@temp}}}}%
  \expandafter\let\expandafter#1\csname relax\string#1\endcsname
}
\newcommand{\refcheckizetwo}[1]{%
  \expandafter\let\csname @@\string#1\endcsname#1%
  \expandafter\DeclareRobustCommand\csname relax\string#1\endcsname[2]{%
    \csname @@\string#1\endcsname{##1}{##2}\wrtusdrf{##1}\wrtusdrf{{##1}}\wrtusdrf{##2}\wrtusdrf{{##2}}}%
  \expandafter\let\expandafter#1\csname relax\string#1\endcsname
}
\newtheorem{theorem}{Theorem}
\newtheorem{lemma}[theorem]{Lemma}
\theoremstyle{remark}
\numberwithin{equation}{section}
\numberwithin{theorem}{section}
\numberwithin{table}{section}
\numberwithin{figure}{section}
 \def\({\left(}
 \def\){\right)}
 \def\mand{\qquad \mbox{and} \qquad}
\def\paragraph{\@startsection{paragraph}{4}%
    \z@\z@{-\fontdimen2\font}%
{\normalfont\bfseries}}
\def\cM{{\mathcal M}}
\def\cS{{\mathcal S}}
\def\cT{{\mathcal T}}
\def\K{\mathbb{K}}
\def\Z{\mathbb{Z}}
\def\R{\mathbb{R}}
\def\Q{\mathbb{Q}}
\def\L{\mathbb{L}}
\def\fa{\mathfrak{a}}
\def\fb{\mathfrak{b}}
\def\fd{\mathfrak{d}}
\def\fh{\mathfrak{h}}
\def\fp{\mathfrak{p}}
\def\o{{\mathfrak o}}
\def\dD{D}
\def\ord{\mathrm{ord}}
\def\PrePer{\mathrm{PrePer}}
\def\MK{\cM}
\def\degf{\fd}
\def\sumloglogprimes{\mathscr L}
\def\underprime{\rho}
\def\allprimes{\mathscr P}
\def\suppset{\mathrm{supp}}
\def\largestsupp{\mathfrak{P}}
\definecolor{olive}{rgb}{0.3, 0.4, .1}
\definecolor{dgreen}{rgb}{0.,0.5,0.}
\definecolor{dgreen}{rgb}{0.,0.6,0.}
\DeclarePairedDelimiter\abs{\lvert}{\rvert}%
\DeclarePairedDelimiter\floor{\lfloor}{\rfloor}%
\DeclareMathOperator{\Nm}{N}
\def\yideal{\mathfrak{I}}
\def\cht{\hat{h}}
\def\loght{\lambda}
\def\sint{a}
\begin{document}

\title[ Multiplicatively Dependent Orbits Modulo $\cS$-Integers]{Effective Bounds on Multiplicatively Dependent Orbits of Integer Polynomials Modulo $\cS$-Integers}

\author[R. Li]{Ray Li}
\address{Department of Pure Mathematics, University of New South Wales,
Sydney, NSW 2052, Australia}
\email{rayli.main@gmail.com}

 \author[I. E. Shparlinski] {Igor E. Shparlinski}

\address{Department of Pure Mathematics, University of New South Wales,
Sydney, NSW 2052, Australia}
\email{igor.shparlinski@unsw.edu.au}

\date{}

\begin{abstract}
We obtain effective bounds on the heights of algebraic integers
whose orbits contain multiplicatively dependent values modulo $\cS$-integers.
Our method is based on a new upper bound on the so-called $\cS$-height of polynomial values 
over the ring of integers of $\K$.
Our results provide an effective variant of a recent result of  
A.~B\' erczes, A.~Ostafe, I.~E.~Shparlinski and 
J.~H.~Silverman (2019) on multiplicative dependence modulo a finitely generated subgroup
by eliminating the use of non-effective results by K.~F.~Roth and G.~Faltings. 
\end{abstract}

\keywords{Polynomial orbit,  multiplicative dependence modulo a group, effective bound}
\subjclass[2010]{11R27, 37P05, 37P15}

\maketitle

\section{Introduction}

\subsection{Background} 
For a polynomial  $f(X) \in \K[X]$ and $n \geq 0$, we write $f^{(n)}(X)$ for the $n$th iterate of $f$, that is, $f^{(0)}(X) = X$ and 
        \[
            f^{(n)}(X) = \underbrace{f \circ f \circ \ldots \circ f}_{n\:\text{times}}(X), \qquad n \ge 1.
        \]
The orbit of $\alpha \in \K$ is the set $\{\alpha, f(\alpha), f^{(2)}(\alpha), \ldots\}$.
In case the set is finite we say that $\alpha$ is  \textit{preperiodic\/} and we use
 $\PrePer(f)$ to denote the set of preperiodic points $\alpha \in \K$.

A famous theorem of Northcott~\cite{North}
says that for any number field  $\K$, for any
nontrivial polynomial
$f(X) \in \K[X]$ the set $\PrePer(f)$ is finite.
Namely there are only finitely many $\alpha \in\K$ such that
\begin{equation}
 \label{eq:Northcott} 
 f^{(m)}(\alpha) = f^{(n)}(\alpha)
\end{equation}
for two distinct iterations of $f$ (that is, for $m \neq n$).

Coupled with modern counting 
results on the number of algebraic numbers of bounded height and degree, see~\cite{Bar1,Bar2,Wid1,Wid2},
one can obtain various effective and rather explicit versions of this result. 

Several generalisations of the finiteness result of Northcott~\cite{North}
have recently been considered in~\cite{BOSS,OSSZ,OstShp}, 
where
\Cref{eq:Northcott}  has been replaced by 
various restrictions of multiplicative type on the ratios $f^{(m)}(\alpha) /f^{(n)}(\alpha)$ or even 
the ratios of the powers  $f^{(m)}(\alpha)^r /f^{(n)}(\alpha)^s$.

For example, it is shown in~\cite[Theorems~1.3 and~1.4]{BOSS} that if   $f(X) \in \K[X]$ of degree $d \ge 2$
is not of the form    $f(X)=  a X(X-b)^{d-1}$  with $a \in \K^*$ and $b \in \K$, then for any 
finitely generated multiplicative subgroup $\Gamma \in \K^*$, there are only finitely many 
$\alpha \in \K$ for which
\begin{equation}
\label{eq:Northcott-Gamma} 
f^{(n)}(\alpha) /f^{(m)}(\alpha)\in \Gamma
\end{equation}
for some integers $m > n \ge 0$.

\subsection{New results} 
Unfortunately the method  of~\cite{BOSS} relies on the results of Faltings~\cite{Falt1,Falt2} 
and thus is not effective.
We consider the more general equation
\begin{equation}
    \label{eq:mnuIntegers}
    f^{(n)}(\alpha) = \sint f^{(m)}(\alpha)
\end{equation}
for some integers $m > n \geq 0$ and an $\cS$-integer $\sint \in \o_\cS$
(see \Cref{eq:SIntDefn} for a definition).
Note that \Cref{eq:Northcott-Gamma} is a special case of \Cref{eq:mnuIntegers}.
However, \Cref{eq:mnuIntegers} is no longer symmetric in $m$ and $n$.

In \Cref{thm:MultDepNorthcottNF} (see also \Cref{thm:LowerBoundm}) we show that in the case where $f(X) \in \o[X]$, where $\o$ is the ring of integers of $\K$, 
and $\alpha \in \o$,
one can obtain an effective bound on the size of $\alpha$.
In fact, we trace the explicit dependence on $\cS$.
This is an effective version of~\cite[Theorem~1.4]{BOSS}.

Furthermore, we also provide an effective variant of~\cite[Theorem~1.7]{BOSS}
which states that, under mild additional constraints, there are only finitely many $\alpha \in \K$ that
satisfy the following relation of multiplicative dependence modulo $\cS$-units among values in an orbit
\[
    f^{(n+k)}(\alpha)^r \cdot f^{(k)}(\alpha)^s \in \o_\cS^*
\]
for some $n,k \geq 1$ and $(r,s) \neq (0,0)$.
That is, we give an  \textit{effective upper bound} on the height
of $\alpha \in \o$
that satisfy
\begin{equation}
    \label{eq:MultDepRelation}
    \left(f^{(m)}(\alpha)\right)^r = u \left(f^{(n)}(\alpha)\right)^s
\end{equation}
for some integers  $m > n \geq 1$,
$(r,s) \neq (0,0)$ and
an  $\cS$-unit $u \in \o_\cS^*$,  see \Cref{eq:S-unit} for a 
definition.

As in~\cite{BOSS}, the key to proving \Cref{thm:MultDepNorthcottNF}
is an upper bound on the  \textit{$\cS$-height of polynomial values\/}, see \Cref{eq:S-height} for a 
precise definition, which we believe 
is of independent interest and may find other applications.
Recall that in~\cite{BOSS} this upper bound is provided by~\cite[Theorem~11(c)]{HsiaSilverman}
which is unfortunately not effective.
Here we modify the argument of~\cite{BOSS}
to use an effective variant of~\cite[Theorem~11(c)]{HsiaSilverman}
which we provide by
extending~\cite[Theorem~2.2]{BEG} to number fields. 

We note that obtaining such extensions 
in terms of the norm in $\o$ can be done by following the arguments in~\cite{BEG}.
However, such a generalisation is not sufficient for our purpose, so we add some additional 
ideas and ingredients in order to obtain a bound in terms of the $\cS$-height (see \Cref{eq:S-height}).

\subsection{General notation and conventions} 
\label {sec:note}

We set the following notation which we use for the rest of this paper. 
We refer to~\cite{BomGub} for a background on valuations, height and other notions 
we introduce below.

Throughout this paper, we assume that $\K$ is a number field of degree $d$, with class number $\fh$, regulator $R$ and ring of integers  $\o$.

We use $\MK$ to denote the set of places of $\K$ and write 
\[
\MK =   \MK^{\infty} \cup \MK^0, 
\]
where  $\MK^{\infty}$  and $\MK^0$ are the set of archimedean (infinite) and non-archimedean (finite) places of $\K$ respectively.

We always assume that $\cS$  is  a finite set of  places containing $\MK^{\infty}$
and use $\cS_0 = \cS \cap \MK^0$ to  denote the set of finite places in $\cS$. We also define
\[
s = \# \cS \mand  t = \# \cS_0.
\]

As usual, $\o_\cS^*$ denotes the group of $\cS$-units, that is
\begin{equation}
  \label{eq:S-unit}
  \o_\cS^*= \{u \in \K^*:~\abs{u}_v = 1 \ \forall v \in \cM \setminus \cS \}. 
\end{equation}
In particular, $\o^* =  \o_{\MK^{\infty}}^*$ is the group of units which, by the Dirichlet Unit Theorem, is
a finitely generated group of rank $\#  \MK^{\infty} -1$.

Similarly, $\o_\cS$ denotes the ring of $\cS$-integers, that is
\begin{equation}
  \label{eq:SIntDefn}
  \o_\cS= \{a \in \K:~\abs{a}_v \leq 1 \ \forall v \in \cM \setminus \cS \}. 
\end{equation}

We use $\Nm(\fa)$ for the norm of the ideal $\fa$, we also write  
$\Nm(\alpha)$ to mean  $\Nm([\alpha])$, where $[\alpha]$ is the principal ideal in $\o$ generated by $\alpha \in \o$.
In particular, $\Nm(\alpha) >0$ for $\alpha \ne 0$. 

For $x \geq 0$
it is convenient to introduce the functions 
\[
\log^+ x = \max\{\log x, 0\} \mand \log^*x = \max\{\log x, 1\}, 
\]
with $\log^+ 0 = 0$, $\log^* 0 = 1$.
We are now able to define the   \textit{logarithmic height\/} of  $\alpha  \in \K$ as 
\[
h(\alpha) = \sum_{v \in \MK} \frac{\ell_v}{d} \log^+\abs{\alpha}_v, 
\]  
where
\begin{itemize}
    \item $\abs{\alpha}_v$ is the absolute value extending
        the valuation on $\Q$.
        That is, for a finite place $v$ corresponding to a prime ideal $\mathbf{p} \mid p$
        \[
            \abs{\alpha}_v = p^{-\ord_\mathbf{p}\, \alpha/e_v},
        \]
        where $\ord_\mathbf{p}\, \alpha$ is the $\mathbf{p}$-adic order of $\alpha$.
\item  $\ell_v$ denotes the local degree of the  valuation $v$, that is
        \[
            \ell_v = [\K_v : \Q_v], 
        \]
        where $\K_v$ and $\Q_v$ are the completions at $v$.
\end{itemize}

Finally, for a set $\cT \subseteq \MK$ we use  
\begin{equation}
\label{eq:S-height}
            h_\cT(\alpha) =  \sum_{v \in \cT} \frac{\ell_v}{d} \log^+\abs{\alpha}_v
\end{equation}
to denote the  \textit{$\cT$-height} of  $\alpha  \in \K$. 

We also recall the identity, 
\begin{equation}
\label{eq:sum l/d}
\sum_{v_i \in \MK^\infty} \frac{\ell_{v_i}}{d} = 1.
\end{equation}
which is a special case of~\cite[Corollary~1.3.2]{BomGub} applied to the archimedean valuation of $\Q$.
 
Let $\allprimes_\K$ denote the set of all prime ideals of $\o$.
For $\alpha \in \K^*$ define
\[
    \suppset(\alpha) = \{\mathbf{p} \in \allprimes_\K \mid \ord_\mathbf{p}(\alpha) > 0\}
\]
and
\[
    \largestsupp(\alpha) = \max_{\mathbf{p} \in \suppset(\alpha)} \Nm(\mathbf{p})
\]
with the convention that $\largestsupp(\alpha) = 1$ if $\suppset(\alpha) = \varnothing$.

We use $A$ with or without 
subscripts or arguments  for fully  \textit{explicit} constants, while $c$ and $C$
are used for not explicit but   \textit{effective\/}
constants depending on their arguments.

\section{Main Results}

\subsection{Height of \texorpdfstring{$\cS$}{S}-parts of polynomials in number fields}
We start with an 
effective version 
of~\cite[Theorem~1.4]{BOSS}
where we also make the dependence on $\cS$ completely explicit. 
This result is proven by
extending~\cite[Theorem~2.2]{BEG} to number fields.  We note that it is 
also indicated in~\cite{BEG} that such an extension to number fields should be possible.
However, if one follows closely the argument of the proof of~\cite[Theorem~2.2]{BEG}  
this leads to such an extension in terms of the norm, while for our purpose we need 
it in terms of the height, which requires bringing in additional tools.

First we need to define some notation stemming from the use
of~\cite{GyoryYu}.

Suppose we are working with a finite set of
places $\cS$ and the prime ideals
$\{\mathbf{p}_1, \cdots, \mathbf{p}_t\}$
correspond to the places in $\cS_0$.
Then define
\begin{equation}
    \label{eq:PQTdefn}
    P = \max\limits_{i \in [1,t]} \Nm(\mathbf{p_i}),\quad Q = \Nm(\mathbf{p_1\cdots p_t}),
    \quad \sumloglogprimes = \sum_{i=1}^t \log^* \log \Nm(\mathbf{p_i})
\end{equation}
with the convention that  for  $\cS = M^{\infty}$ we set $P = Q = 1$, $\sumloglogprimes = 0$.   

Also define the functions
\begin{equation}
    \label{eq:A1defn}
    A_1(u,v) = v^{2v + 3.5}2^{7v}\log(2v)u^{2v}
\end{equation} 
and
\begin{equation}
    \label{eq:A2defn}
    A_2(u,v) = (2048u)^v v^{3.5}
\end{equation}
which stem from~\cite{GyoryYu} which underlies our argument.

We recall the definition of $h_\cS$ in Section~\ref{sec:note} and also that $d = [\K:\Q]$. 

\begin{theorem}
    \label{thm:SPartBoundNF}
    Let $f(X) \in \o[X]$ be a polynomial with at least 3 distinct roots.
    Let $\L$ be a splitting field of $f$ over $\K$, let $\dD = [\L : \K]$ and let $\fh_\L$ denote the class number of $\L$.
    Let $\cS$ be a finite set of $s$ places of $\K$ containing all infinite places
    and let $t = \#\cS_0$.
    Then for all $\alpha \in \o$, $f(\alpha) \neq 0$
    we have
    \begin{equation}
        \label{eq:SPartBoundGeneric}
        h_\cS(f(\alpha)^{-1})
        < (1-\eta_1(\K,f,\cS))
        (h(f(\alpha)) + 1),
    \end{equation}
    where
    \begin{align*}
    \eta_1(\K,f,\cS)^{-1}  & = c_1(\K, f) A_1(d \dD, s\dD) \max\{1,t\}\\
    & \qquad \qquad  \times P^{\dD}  (\log^* P+ \sumloglogprimes)\prod_{i=1}^t \log (\Nm(\mathbf{p_i}))^{\dD},
    \end{align*} 
    and, for $t > 0$, we have
    \begin{equation}
        \label{eq:SPartBound2}
        h_\cS(f(\alpha)^{-1})
        < (1-\eta_2(\K,f,\cS))
        (h(f(\alpha)) + 1),
    \end{equation}
    where
    \[
    \eta_2(\K,f,\cS)^{-1}   = c_1(\K, f) A_2(d \dD \fh_\L, t\dD) t      P^{\dD} \prod_{i=1}^t \log (\Nm(\mathbf{p_i}))^{\dD},
    \]
    where $c_1(\K,f) > 0$ is effectively computable.
\end{theorem}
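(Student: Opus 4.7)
My plan is to extend the argument of~\cite{BEG} from $\Z$ to the ring of integers $\o$ of $\K$, combining Siegel's identity with the effective bound on $\cS$-unit equations due to Gy\H{o}ry and Yu~\cite{GyoryYu}.

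First I would pass to the splitting field $\L$, factor $f(X) = a_0\prod_{i=1}^k(X-\rho_i)^{e_i}$ with $\rho_i\in\L$ and $k\ge 3$, and fix three distinct roots $\rho_1,\rho_2,\rho_3$. Let $\cT$ denote the set of places of $\L$ above $\cS$, and let $\cU\supseteq\cT$ be obtained by adjoining the finite-place supports of $a_0$ and of $\rho_i-\rho_j$ for $i,j\in\{1,2,3\}$; only finitely many extra places (depending on $f$ and $\L$ alone) are added. Invoking Siegel's identity
\[
(\rho_2-\rho_3)(\alpha-\rho_1)+(\rho_3-\rho_1)(\alpha-\rho_2)+(\rho_1-\rho_2)(\alpha-\rho_3)=0,
\]
and setting
\[
X=-\frac{(\rho_2-\rho_3)(\alpha-\rho_1)}{(\rho_1-\rho_2)(\alpha-\rho_3)},\qquad Y=-\frac{(\rho_3-\rho_1)(\alpha-\rho_2)}{(\rho_1-\rho_2)(\alpha-\rho_3)},
\]
yields $X+Y=1$. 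The critical local observation is that, for $v\notin\cU$, the ultrametric inequality applied to $(\alpha-\rho_i)-(\alpha-\rho_j)=\rho_j-\rho_i$ (a $v$-adic unit) forces at most one of $\ord_v(\alpha-\rho_i)$, $i\in\{1,2,3\}$, to be positive. Consequently the coprime-to-$\cU$ parts of the ideals $(\alpha-\rho_i)$ are pairwise coprime. Passing to $\fh_\L$-th powers to make these ideal parts principal, one obtains a genuine $\cS'$-unit equation $\tilde X+\tilde Y=1$, with $\cS'$ an enlargement of $\cU$ whose parameters are controlled by those of $\cS$ after the degree extension to $\L$: the number of places is at most $s\dD$ and the maximal prime norm is at most $P^{\dD}$.

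Next I would apply the effective Gy\H{o}ry-Yu bound to $\tilde X+\tilde Y=1$ in $\L$, obtaining an upper estimate for $h(\tilde X)$ that reproduces, up to the multiplicative constant $c_1(\K,f)$, the quantities defining $\eta_1^{-1}$ and $\eta_2^{-1}$ in the theorem statement. The two available variants of the Gy\H{o}ry-Yu bound account for the two versions of the conclusion: the second trades the factor $(\log^* P+\sumloglogprimes)\prod\log\Nm(\mathbf{p}_i)^{\dD}$ for a class-number factor $\fh_\L$, whence the appearance of $A_2$ in $\eta_2^{-1}$. Simultaneously, expanding $f(\alpha)=a_0\prod\gamma_i^{e_i}$ with $\gamma_i=\alpha-\rho_i$ and tracking the $\cT$-adic valuations gives a lower bound of the shape $h(\tilde X)\gtrsim \fh_\L\bigl(h_\cS(f(\alpha)^{-1})-C_0(f,\cS)\bigr)$, where the trivial estimate $h_\cS(f(\alpha)^{-1})\le h(f(\alpha))$ is used to absorb the contributions from the extra factors $\gamma_j$, $j\notin\{1,2,3\}$. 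Combining the two bounds and rearranging yields both claimed inequalities.

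The main obstacle is the second stage: setting up the $\cS$-unit equation cleanly over a number field, where $\o_\L$ need not be a PID and the roots $\rho_i$ need not lie in $\o_\L$. This is why $\fh_\L$ enters $\eta_2^{-1}$ explicitly, and why we must add the supports of $a_0$ and $\rho_i-\rho_j$ to $\cU$. The translation from the norm-based bound in~\cite{BEG} to an $\cS$-height-based bound also demands additional care, since the passage through the product formula and the class group must preserve place-by-place information across $\cS$; this is precisely the point flagged by the authors as the reason that a direct transcription of~\cite{BEG} is insufficient.
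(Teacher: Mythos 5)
Your route --- Siegel's identity feeding into the Gy{\"o}ry--Yu bound for $\cS$-unit equations --- stumbles at its pivotal step: the ratios $X,Y$ (and their $\fh_\L$-th powers) are \emph{not} $\cS'$-units for any set $\cS'$ controlled by $\K$, $f$ and $\cS$. Your own local observation shows the opposite of what you need: if $\ord_v(\alpha-\rho_1)>0$ at some $v\notin\cU$, then $\ord_v(\alpha-\rho_3)=0$, so $X$ fails to be a $v$-adic unit, and the support of $X$ contains the unbounded, $\alpha$-dependent part of the support of $\alpha-\rho_1$. Raising to the power $\fh_\L$ trivialises ideal classes but does not shrink supports. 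The defect is visible in the conclusion your two estimates would produce: an \emph{absolute} Gy{\"o}ry--Yu upper bound for $h(\tilde X)$ (depending only on $\K,f,\cS$) combined with $h(\tilde X)\gtrsim \fh_\L\bigl(h_\cS(f(\alpha)^{-1})-C_0(f,\cS)\bigr)$ would bound $h_\cS(f(\alpha)^{-1})$ by a constant independent of $\alpha$. That is false: whenever $f$ has a simple root modulo some $\mathbf{p}\in\cS_0$, Hensel lifting produces $\alpha\in\o$ with $\ord_{\mathbf{p}}f(\alpha)$ arbitrarily large. The theorem is intrinsically a \emph{relative} bound, and your sketch contains no mechanism by which the complementary quantity $h_{\MK\setminus\cS}(f(\alpha)^{-1})$ enters the upper estimate. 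A second gap: the $\cS$-part of $f(\alpha)$ may be carried entirely by factors $\alpha-\rho_j$ with $j\notin\{1,2,3\}$, in which case the fixed Siegel identity yields no lower bound on $h(\tilde X)$ whatsoever; the trivial estimate $h_\cS(f(\alpha)^{-1})\le h(f(\alpha))$ cannot ``absorb'' what may be the whole contribution.

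The paper's proof supplies exactly the two missing ingredients. It homogenises $f$ to a decomposable form $F$ and \emph{normalises} $b=f(\alpha)$: writing $\ord_{\mathbf{p_i}}b=\degf\fh q_i+r_i$, it divides by generators $p_i$ of $\mathbf{p_i}^{\fh}$ to form $c=b/\prod p_i^{\degf q_i}$ (whose $\cS$-part is bounded), and then multiplies the arguments of $F$ by a unit $\varepsilon$ chosen via \Cref{lem:UnitExistsHtNm} so that $h(\varepsilon^{\degf}c)$ is comparable to $\frac1d\log\Nm(c)$, hence to $h_{\MK\setminus\cS}(b^{-1})$. The resulting equation $F(\varepsilon x/\prod p_i^{q_i},\,\varepsilon/\prod p_i^{q_i})=\varepsilon^{\degf}c$ has $\cS$-integral unknowns and a right-hand side whose height carries precisely the non-$\cS$-part of $b$; \Cref{lem:BoundhDecomposableForm} (Gy{\"o}ry--Yu for decomposable forms, applied as a black box) then bounds $h(\varepsilon/\prod p_i^{q_i})$ from above by a constant times $\log^*Q+h(\varepsilon^{\degf}c)$, while from below this height is at least $\frac1{\degf}h_\cS(b^{-1})-O(1)$. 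This is where the factor $(1-\eta)(h(f(\alpha))+1)$ comes from, and the combinatorics of triples of linear factors that you attempt by hand is already packaged inside the decomposable-form theorem (the triangular-connectedness hypothesis). If you wish to salvage a Siegel-identity argument, you would have to perform this normalisation first and run the identity on the linear factors of $F$ at the normalised point, which amounts to re-proving the relevant case of~\cite{GyoryYu}.
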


Note that \Cref{eq:SPartBound2}
omits the $v^v$ term
in~\Cref{eq:A1defn}.
This is necessary for the proofs of
\Cref{thm:LowerBoundm,thm:WeakZsigmondy}.

We also note that, using
the recent improvement~\cite[Corollary 4]{Gyory}
in place of~\Cref{lem:BoundhDecomposableForm},
we can replace the main dependence on $P$
by a dependence on the third largest value of
$\Nm(\mathbf{p_i})$, $i = 1, \ldots, t$.

\subsection{Effective bounds on points with multiplicatively dependent orbits} 
We recall that $d = [\K : \Q]$.

\begin{theorem}
    \label{thm:MultDepNorthcottNF}
    Let $f(X)\in\o[X]$ be a polynomial with at least 3 distinct roots
    and for which $0$ is not periodic.
    Let $\cS$ be a finite set of places of $\K$ containing all infinite places
    and let $t = \# \cS_0$.
    Then for any $\alpha \in \o$ such that \Cref{eq:mnuIntegers} holds 
    for some non-negative integers $m > n$ and $\sint \in\o_{\cS}$ we have
    \begin{equation}
        \label{eq:MultDepNorthcottBound1}
        \begin{split}
            h(\alpha) <\ & c_2(\K, f) \eta_1(\K, f, \cS)^{-1},
        \end{split}
    \end{equation}
    and, for $t > 0$,
    \begin{equation}
        \label{eq:MultDepNorthcottBound2}
    \begin{split}
        h(\alpha) <\ & c_2(\K, f) \eta_2(\K,f,\cS)^{-1},
    \end{split}
    \end{equation}
    where $\eta_1(\K,f,\cS)$, $\eta_2(\K,f,\cS)$
    are as in \Cref{thm:SPartBoundNF}
    and
    $c_2(\K, f)$ is an effectively computable constant.
\end{theorem}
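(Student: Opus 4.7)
Set $\alpha_j := f^{(j)}(\alpha)$ for $j \geq 0$, so that \Cref{eq:mnuIntegers} reads $\alpha_n = \sint\,\alpha_m$ with $\sint \in \o_\cS$ and $m > n \geq 0$. The strategy is to push this relation through \Cref{thm:SPartBoundNF} to obtain a strong comparison between $h(\alpha_m)$ and $h(\alpha_n)$, and then to transfer that comparison back to $h(\alpha)$ by means of the canonical height of $f$.

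Applied with input $\alpha_{m-1}$, so that $f(\alpha_{m-1})=\alpha_m$, \Cref{thm:SPartBoundNF} gives
\[
h_\cS(\alpha_m^{-1}) < (1-\eta_i)(h(\alpha_m)+1), \qquad i\in\{1,2\}.
\]
On the other hand, by the product formula $h(\alpha_m)=h(\alpha_m^{-1})$, and since $\alpha_m\in\o$ the latter decomposes as
\[
h(\alpha_m) = h_\cS(\alpha_m^{-1}) + \sum_{v\in\MK^0\setminus\cS_0} \frac{\ell_v}{d}\log^+|\alpha_m^{-1}|_v.
\]
Because $\sint\in\o_\cS$ forces $|\sint|_v\leq 1$ at every $v\notin\cS$, one has $|\alpha_n|_v\leq|\alpha_m|_v$ there, hence $\log^+|\alpha_m^{-1}|_v\leq\log^+|\alpha_n^{-1}|_v$; summing over these places and using $h(\alpha_n^{-1})=h(\alpha_n)$ bounds the non-$\cS$ tail by $h(\alpha_n)$. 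Combining the displays yields the core inequality
\[
\eta_i\,h(\alpha_m) \leq 1 + h(\alpha_n).
\]

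To bound $h(\alpha)$ itself I pass to the canonical height $\cht$ of $f$, which comes with an effectively computable constant $c_f$ satisfying $|\cht(\beta)-h(\beta)|\leq c_f$ together with $\cht(f^{(j)}(\beta)) = (\deg f)^{j}\cht(\beta)$. Substituting $h(\alpha_j) = (\deg f)^{j}\cht(\alpha) + O(c_f)$ into the core inequality produces
\[
\bigl(\eta_i(\deg f)^{m} - (\deg f)^{n}\bigr)\cht(\alpha) \leq 1 + (1+\eta_i)\,c_f.
\]
When the gap satisfies $(\deg f)^{m-n} \geq 2\eta_i^{-1}$, the prefactor on the left is at least $\eta_i(\deg f)^{m}/2$, which gives $\cht(\alpha) \leq c'_f\,\eta_i^{-1}$ for an effectively computable $c'_f$; then $h(\alpha) \leq \cht(\alpha) + c_f$ yields the bounds claimed in \Cref{eq:MultDepNorthcottBound1} and \Cref{eq:MultDepNorthcottBound2}.

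The main obstacle is the complementary regime, where $m-n$ is of size at most $O(\log\eta_i^{-1})$ and the canonical-height inequality is no longer decisive. To handle it I would rewrite $\alpha_n = \sint\,f^{(m-n)}(\alpha_n)$ in the form
\[
\alpha_n\bigl(1 - \sint\,Q(\alpha_n)\bigr) = \sint\,f^{(m-n)}(0), \quad Q(X) := \frac{f^{(m-n)}(X)-f^{(m-n)}(0)}{X}\in\o[X],
\]
and exploit the hypothesis that $0$ is not periodic (so that $f^{(m-n)}(0)\neq 0$) to conclude that $\alpha_n$ divides $\sint\,f^{(m-n)}(0)$ in $\o_\cS$. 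Feeding this constraint back through a further application of \Cref{thm:SPartBoundNF}, in concert with \Cref{thm:LowerBoundm} which is designed precisely for the small-gap regime, should close the argument and produce the stated effective bound on $h(\alpha)$.
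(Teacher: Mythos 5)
Your first half is sound and coincides with the paper's starting point: applying \Cref{thm:SPartBoundNF} to $f^{(m)}(\alpha)=f(\alpha_{m-1})$ and using $|\sint|_v\le 1$ off $\cS$ to bound the non-$\cS$ tail. But the comparison you extract, $\eta_i\,h(\alpha_m)\le 1+h(\alpha_n)$, is too lossy: after passing to canonical heights it only controls $\cht_f(\alpha)$ when $\eta_i\degf^{m}$ dominates $\degf^{n}$, i.e.\ when $\degf^{m-n}\gtrsim\eta_i^{-1}$. You correctly flag the complementary small-gap regime as the main obstacle, but what you offer there is not a proof. In particular, appealing to \Cref{thm:LowerBoundm} would be circular: in the paper that theorem is derived (via \Cref{lem:MultDepPrimeSupport}) from \Cref{thm:MultDepNorthcottNF} itself. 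And the divisibility you derive, $\alpha_n\mid \sint f^{(m-n)}(0)$ in $\o_\cS$, is contaminated by the uncontrolled factor $\sint$, so it does not feed back into \Cref{thm:SPartBoundNF} in any obvious way.

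The paper avoids the dichotomy entirely by bounding the non-$\cS$ tail not by $h(\alpha_n)$ but by $h(f^{(k)}(0))$ with $k=m-n$. Let $\fa$ be the $\cS$-free part of $[f^{(m)}(\alpha)]$. Since $\sint\in\o_\cS$, one has $\fa\mid f^{(n)}(\alpha)$; and since $f\in\o[X]$ and $f^{(m)}(\alpha)=f^{(k)}(f^{(n)}(\alpha))$, reducing modulo $\fa$ gives $\fa\mid f^{(k)}(0)$, which is nonzero precisely because $0$ is not periodic (this is where that hypothesis enters). Hence
\[
h_{\MK\setminus\cS}\bigl(f^{(m)}(\alpha)^{-1}\bigr)\le h\bigl(f^{(k)}(0)\bigr)\le \degf^{k}\cht_f(0)+C_6(\K,f)\le \degf^{m}\cht_f(0)+C_6(\K,f),
\]
and comparing with the lower bound $\tfrac{\eta_i}{2}(\degf^{m}\cht_f(\alpha)-C_6(\K,f))$ yields $\cht_f(\alpha)\lesssim\eta_i^{-1}\cht_f(0)$ uniformly in $m>n$, with no case split. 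Note that your own identity, after substituting $\alpha_n=\sint\alpha_m$ and cancelling $\sint$, reads $\alpha_m\bigl(1-\sint Q(\alpha_n)\bigr)=f^{(k)}(0)$ with $1-\sint Q(\alpha_n)\in\o_\cS$ --- exactly the divisibility above --- so you were one algebraic step away from closing the argument without any auxiliary input.
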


With \Cref{thm:MultDepNorthcottNF} we can also prove the following effective variant
of \cite[Theorem~1.7]{BOSS}.

\begin{theorem}
    \label{thm:MultDepGeneralThm}
    Let $f(X) \in \o[X]$ be a polynomial of degree at least 3 without multiple roots and for which 0 is not periodic.
    Let $\cS$ be a finite set of places of $\K$ containing all infinite places.
    Then for any tuple $(m,n,\alpha,r,s)$ for which \Cref{eq:MultDepRelation} holds with $n \geq 1$
    we have
    \[
        h(\alpha) < c_3(\K, f, \cS)
    \]
    for some effectively computable $c_3(\K,f,\cS)$.
\end{theorem}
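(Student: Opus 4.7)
The plan is to derive, by a case analysis on the signs and magnitudes of $r$ and $s$, either an instance of \Cref{eq:mnuIntegers} to which \Cref{thm:MultDepNorthcottNF} applies, or an $\cS$-unit condition amenable to \Cref{thm:SPartBoundNF}. Taking the $\mathfrak{p}$-adic valuation of \Cref{eq:MultDepRelation} at each $\mathfrak{p}\in\MK^0\setminus\cS$ gives
\[
r\,\ord_{\mathfrak{p}}\!\left(f^{(m)}(\alpha)\right)=s\,\ord_{\mathfrak{p}}\!\left(f^{(n)}(\alpha)\right),
\]
with both sides nonnegative since $f^{(n)}(\alpha),f^{(m)}(\alpha)\in\o$.

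If $r=0$, $s=0$, or $rs<0$, then at least one of $f^{(n)}(\alpha),f^{(m)}(\alpha)$ must lie in $\o_\cS^*$. Writing this iterate as $f(\gamma)$ for $\gamma=f^{(k-1)}(\alpha)\in\o$ (valid since $n\geq1$), the condition $f(\gamma)\in\o_\cS^*$ gives $\abs{f(\gamma)}_v=1$ for $v\notin\cS$, hence $h_\cS(f(\gamma)^{-1})=h(f(\gamma)^{-1})=h(f(\gamma))$, and \Cref{eq:SPartBoundGeneric} rearranges to $h(f(\gamma))<\eta_1(\K,f,\cS)^{-1}$. The standard iteration inequality $h(\alpha)\leq h(f^{(k)}(\alpha))/(\deg f)^k+O_f(1)$ then bounds $h(\alpha)$ effectively.

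If $r$ and $s$ have the same sign (WLOG both positive) and $r\geq s$, the valuation identity forces $f^{(n)}(\alpha)/f^{(m)}(\alpha)\in\o_\cS$, which is precisely \Cref{eq:mnuIntegers}, so \Cref{thm:MultDepNorthcottNF} directly bounds $h(\alpha)$. If instead $s>r$, setting $\beta:=f^{(n)}(\alpha)$ gives the reverse relation $f^{(m-n)}(\beta)=a\beta$ with $a\in\o_\cS$. Expanding $f^{(m-n)}(\beta)=f^{(m-n)}(0)+\beta R(\beta)$ with $R\in\o[X]$ and using that $0$ is not periodic (so $f^{(m-n)}(0)\neq0$) yields $f^{(m-n)}(0)=\beta\bigl(a-R(\beta)\bigr)$, so $\beta$ divides $f^{(m-n)}(0)$ in $\o_\cS$. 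Applying \Cref{thm:SPartBoundNF} to $\beta=f(f^{(n-1)}(\alpha))$, the equality $h_\cS(\beta^{-1})=h(\beta)-\sum_{\mathfrak{p}\notin\cS}\frac{1}{d}\ord_\mathfrak{p}(\beta)\log\Nm(\mathfrak{p})$ combined with the $\cS$-divisibility controls the right-hand side against $h(f^{(m-n)}(0))$ and bounds $h(\beta)$, from which $h(\alpha)$ follows as before.

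The main obstacle is the last subcase: the direction of $\cS$-integrality produced by \Cref{eq:MultDepRelation} is opposite to that required by \Cref{eq:mnuIntegers}, precluding a direct appeal to \Cref{thm:MultDepNorthcottNF} at $\alpha$ itself. The reduction via $\beta\mid f^{(m-n)}(0)$ and \Cref{thm:SPartBoundNF} must be arranged so that the resulting bound is uniform in the iteration gap $m-n$, which is the technical core of the argument.
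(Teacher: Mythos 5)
Your valuation-based case analysis is sound in most branches, and in the branch $rs>0$, $|r|\ge|s|$ it is actually cleaner than the paper: there the relation does force $f^{(n)}(\alpha)/f^{(m)}(\alpha)\in\o_\cS$, i.e.\ \Cref{eq:mnuIntegers}, and \Cref{thm:MultDepNorthcottNF} applies directly, whereas the paper routes $\max(|r|,|s|)\ge 2$ through superelliptic machinery. The degenerate cases ($r=0$, $s=0$, or $rs<0$, giving an iterate in $\o_\cS^*$) are also handled correctly.

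The genuine gap is the remaining branch $rs>0$ with $|s|>|r|$, which you flag but do not close, and the step you sketch cannot close it. From $f^{(n)}(\alpha)\mid f^{(m)}(\alpha)$ outside $\cS$ and $f^{(m)}(\alpha)\equiv f^{(m-n)}(0)\pmod{f^{(n)}(\alpha)}$ you get $h_{\MK\setminus\cS}\bigl(f^{(n)}(\alpha)^{-1}\bigr)\le h\bigl(f^{(m-n)}(0)\bigr)$, and \Cref{thm:SPartBoundNF} gives $h_{\MK\setminus\cS}\bigl(f^{(n)}(\alpha)^{-1}\bigr)>\eta_1 h\bigl(f^{(n)}(\alpha)\bigr)-1$. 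But $h\bigl(f^{(m-n)}(0)\bigr)\asymp(\deg f)^{m-n}$ while $h\bigl(f^{(n)}(\alpha)\bigr)\asymp(\deg f)^{n}h(\alpha)$, so the resulting bound is $h(\alpha)\lesssim\eta_1^{-1}(\deg f)^{m-2n}$, which is unbounded as soon as $m-n>n$. (In \Cref{thm:MultDepNorthcottNF} the analogous comparison works precisely because there the \emph{larger} iterate divides the smaller one, so the exponent is $k-m\le 0$.) No amount of rearranging the divisibility fixes this: for, say, $r=1$, $s=2$ the relation $f^{(m)}(\alpha)=u\,f^{(n)}(\alpha)^2$ carries no contradiction at the level of valuations alone. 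The paper closes this case by following~\cite{BOSS}: writing $r'=r/\gcd(r,s)$, $s'=s/\gcd(r,s)$, the ideal $[f^{(m)}(\alpha)]$ (resp.\ $[f^{(n)}(\alpha)]$) is, outside $\cS$, a perfect $s'$-th (resp.\ $r'$-th) ideal power, which turns $f(x)=c\,y^{k}$ with $x=f^{(m-1)}(\alpha)$ or $f^{(n-1)}(\alpha)$ into a hyper-/superelliptic equation solvable effectively by~\cite{BEGBerczes} --- this is exactly why the theorem assumes $\deg f\ge 3$ with no multiple roots rather than merely three distinct roots. Your proposal needs this (or an equivalent) ingredient to be complete.
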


Note that we have assumed $m,n \neq 0$, otherwise there are trivially
infinitely many solutions of the form
$\left(f^{(m)}(u)\right)^0 = u^{-1} \left(f^{(0)}(u)\right)$.

\Cref{thm:MultDepGeneralThm} almost directly follows from the proof of \cite[Theorem~1.7]{BOSS}
but instead using \Cref{thm:MultDepNorthcottNF} in place of \cite[Theorem~1.3]{BOSS}.

\subsection{Applications to the existence of large prime ideals in factorisations}

For $\alpha \in \K$, define the function
\[
    \loght(\alpha) = \log^* h(\alpha).
\]

We obtain an effective lower bound on the largest norm of a prime ideal
appearing with a higher order in $f^{(m)}(\alpha)$ than in $f^{(n)}(\alpha)$.

\begin{theorem}
    \label{thm:LowerBoundm}
    Let $f(X)\in\o[X]$ be a polynomial with at least 3 distinct roots
    and for which 0 is not periodic.
    Let $\alpha \in \o$, $m, n \in \Z$, $m > n \geq 0$ such that $f^{(m)}(\alpha)$, $f^{(n)}(\alpha) \neq 0$.
    Then
    \[
        \largestsupp\left(\frac{f^{(m)}(\alpha)}{f^{(n)}(\alpha)}\right)
        > c_4(\K,f)
        \frac{\loght\left(f^{(m)}(\alpha)\right) \log^* \loght\left(f^{(m)}(\alpha)\right)}{\log^* \log^* \loght\left(f^{(m)}(\alpha)\right)},
    \]
    where $c_4(\K, f) > 0$ is an effectively computable constant.
\end{theorem}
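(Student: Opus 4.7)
Let $\beta = f^{(m)}(\alpha)$, $\gamma = f^{(n)}(\alpha)$, $P = \largestsupp(\beta/\gamma)$, $S_1 = \suppset(\beta/\gamma)$, and $t = |S_1|$; by construction every prime in $S_1$ has norm at most $P$. Set $\cS = \MK^\infty \cup S_1$, noting that $\gamma/\beta \in \o_\cS$ since for $\mathbf{p} \notin \cS$ the containment $\mathbf{p} \notin S_1$ forces $\ord_\mathbf{p}(\gamma) \ge \ord_\mathbf{p}(\beta)$.

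The plan is to apply \Cref{thm:SPartBoundNF} in its sharper $\eta_2$ form to $f$ and the input $\alpha' = f^{(m-1)}(\alpha) \in \o$, so that $f(\alpha') = \beta$. This gives $h_\cS(\beta^{-1}) < (1 - \eta_2(\K,f,\cS))(h(\beta)+1)$. To lower-bound $h_\cS(\beta^{-1})$, I would use the product formula to write
\[
h(\beta) - h_\cS(\beta^{-1}) \;=\; \frac{1}{d}\sum_{\mathbf{p} \in \suppset(\beta) \setminus S_1} \ord_\mathbf{p}(\beta)\log\Nm(\mathbf{p}),
\]
and observe that for each such $\mathbf{p}$ the defining property of $S_1$ forces $\ord_\mathbf{p}(\beta) \le \ord_\mathbf{p}(\gamma)$, so this sum is at most $\log \Nm(\gamma) \le d\,h(\gamma)$. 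Coupling this with the standard functorial bound $h(\gamma) \le h(\beta)/D^{m-n} + c_0(\K,f)$ (with $D = \deg f \ge 3$) yields the key inequality
\[
\eta_2(\K,f,\cS)\,(h(\beta)+1) \;<\; h(\beta)/D^{m-n} + c_0 + 1.
\]
In the regime $D^{m-n} \ge h(\beta)$ this forces $\eta_2^{-1} \gg h(\beta)$. Unpacking the explicit formula for $\eta_2^{-1}$ from \Cref{thm:SPartBoundNF} and estimating $\prod_i \log \Nm(\mathbf{p_i})^{\dD} \le \dD^t(\log P)^t$ reduces matters to an inequality of shape $C(\K,f)^t\,t^{4.5}\,P^\dD (\log P)^t \gg h(\beta)$. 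Taking logarithms and invoking Landau's prime ideal theorem to bound $t \le (d + o(1))\,P/\log P$ (since the primes of $S_1$ lie below norm $P$), the contrary assumption $P < c_4\,\loght(\beta) \log^*\loght(\beta)/\log^*\log^*\loght(\beta)$ with $c_4$ small enough makes each term on the left a strict fraction of $\log h(\beta)$, contradicting $\log \eta_2^{-1} \ge \log h(\beta) - O(1)$.

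The main obstacle is the complementary regime $D^{m-n} < h(\beta)$, where the displayed inequality only yields $\eta_2^{-1} \gg D^{m-n}$, which is too weak for the log-bound analysis. To overcome it I would supplement with \Cref{thm:MultDepNorthcottNF} applied to the same $\cS$, which provides $h(\alpha) \le c_2\,\eta_2^{-1}$; coupling this with the standard lower bound $h(\alpha) \ge h(\beta)/D^m - c_0/(D-1)$ upgrades the inequality on $\eta_2^{-1}$ to a strength sufficient for the same log-bound analysis to close the argument. Patching the two regimes uniformly in $m$ and $n$ while keeping every constant effective is the delicate step.
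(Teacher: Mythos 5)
Your choice of $\cS$ and your first regime are essentially the paper's argument: the paper takes for $\cS$ \emph{all} places of norm at most $P$ rather than just $\suppset(\beta/\gamma)$, but this is immaterial since the prime-ideal-counting bound $t\ll dP/\log P$ you invoke is exactly what the paper uses, and your log-inversion of $\eta_2^{-1}$ matches the paper's computation. The genuine gap is in the complementary regime $D^{m-n}<h(\beta)$ (here $D=\deg f$). There you apply \Cref{thm:MultDepNorthcottNF} to the original point $\alpha$ and couple it with $h(\alpha)\ge h(\beta)/D^{m}-O(1)$. But $h(\beta)/D^{m}$ is essentially $\cht_f(\alpha)$, which is $O(1)$ whenever $h(\alpha)$ is bounded. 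Take $\alpha$ of bounded height (not preperiodic), $n$ large and $m=n+1$: then $D^{m-n}=D<h(\beta)$, so you are squarely in this regime, yet your chain yields only $\eta_2^{-1}\gg h(\alpha)=O(1)$ and $\eta_2^{-1}\gg D^{m-n}=O(1)$, whereas the log-bound analysis needs $\log \eta_2^{-1}\gg \loght(f^{(m)}(\alpha))\approx n\log D$. No combination of these two inequalities can reach $\log h(\beta)$, because $h(f^{(m)}(\alpha))$ is exponentially larger in $m$ than $h(\alpha)$; the ``delicate patching'' you defer is not a technicality but the missing half of the proof.

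The repair is to re-base the application of \Cref{thm:MultDepNorthcottNF} at $f^{(n)}(\alpha)$ rather than at $\alpha$: since
\[
f^{(n)}(\alpha)=\frac{\gamma}{\beta}\, f^{(m-n)}\!\left(f^{(n)}(\alpha)\right),\qquad \frac{\gamma}{\beta}\in\o_{\cS},
\]
the theorem applies with starting point $f^{(n)}(\alpha)$ and the pair of iterates $(m-n,0)$, giving $h(f^{(n)}(\alpha))<c_2\,\eta_2(\K,f,\cS)^{-1}$, i.e.\ a lower bound for $\eta_2^{-1}$ in terms of $h(\gamma)$ rather than $h(\alpha)$. One then splits at $(m-n)\log D\approx \loght(f^{(n)}(\alpha))$: above the threshold your first-regime estimate applies (and gives a bound in terms of $\loght(f^{(m)}(\alpha))\ll (m-n)\log D$), while below it $\loght(f^{(n)}(\alpha))\gg\loght(f^{(m)}(\alpha))$ and the re-based bound feeds into the same log-inversion. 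This is precisely how the paper organises the proof (its two lemmas correspond to the two regimes). Two minor further points: your split at $D^{m-n}=h(\beta)$ leaves an intermediate zone where $\lambda(\gamma)$ may be much smaller than $\lambda(\beta)$, so the threshold must be placed as above (or at $\lambda(\beta)/2$); and the case $t=0$ requires the $\eta_1$ form of \Cref{thm:SPartBoundNF}, since $\eta_2$ is only defined for $t>0$.
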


Using standard properties of height, such as \Cref{eq:hfnalphaIneqs} below, we see that  
\Cref{thm:LowerBoundm} implies that
if, in addition, $\alpha$ is not preperiodic, 
then 
  \[
        \largestsupp\left(\frac{f^{(m)}(\alpha)}{f^{(n)}(\alpha)}\right)
        >    c_5(\K,f)
        \frac{m  \log^*  m}{\log^* \log^* m},
\]
where $c_5(\K,f) > 0$ is an effectively computable constant.

Finally, we obtain a result on the existence of primitive divisors
within small sets of iterates.

\begin{theorem}
    \label{thm:WeakZsigmondy}
    Let $f(X) \in \o[X]$ be a polynomial with at least 3 distinct roots
    and for which 0 is not periodic.
    Then there exists an effectively computable constant $c_6(\K, f) > 0$ such that, letting
    \[
        k(m, \alpha) = \floor{c_6(\K, f) \log \lambda(f^{(m)}(\alpha))},
    \]
    for every $m \in \Z$, $m > 0$, and every $\alpha \in \o$,
    $f^{(m)}(\alpha)$ not a unit,
    there exists a prime ideal $\mathbf{p}$ that divides
    $f^{(m)}(\alpha)$ but does not divide any element in the set
    \[
        \{ f^{\left(\max(0,m-k(m,\alpha))\right)}(\alpha), f^{\left(\max(0,m-k(m,\alpha))+1\right)}(\alpha), \cdots, f^{(m-1)}(\alpha)\}.
    \]
\end{theorem}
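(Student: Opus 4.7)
The plan is to argue by contradiction, combining a dynamical reduction modulo a prime with the effective estimate \Cref{thm:SPartBoundNF}. Fix $\alpha \in \o$, $m \ge 1$ with $f^{(m)}(\alpha)$ a non-unit, and set $H = h(f^{(m)}(\alpha))$, $\loght = \loght(f^{(m)}(\alpha))$, and $k = \lfloor c_6 \log \loght \rfloor$, where $c_6 = c_6(\K,f)$ is chosen at the end. If $k=0$ the window in the statement is empty and the conclusion is immediate, so assume $k \geq 1$. Suppose for contradiction that every prime $\mathbf{p}$ with $\mathbf{p}\mid f^{(m)}(\alpha)$ also divides $f^{(j)}(\alpha)$ for some $j = j(\mathbf{p}) \in [\max(0,m-k), m-1]$. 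Writing $l = m-j \in [1,k]$ and reducing the identity $f^{(m)}(\alpha) = f^{(l)}(f^{(j)}(\alpha))$ modulo $\mathbf{p}$ gives $f^{(m)}(\alpha)\equiv f^{(l)}(0)\pmod{\mathbf{p}}$, forcing $\mathbf{p}\mid f^{(l)}(0)$. Because $0$ is not periodic, $B_k := \prod_{l=1}^{k} f^{(l)}(0)$ is a nonzero algebraic integer, and every prime of $f^{(m)}(\alpha)$ divides $B_k$.

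Set $\cS = \MK^{\infty} \cup \{\mathbf{p} \in \MK^0 : \mathbf{p}\mid B_k\}$. By the previous step every prime divisor of $f^{(m)}(\alpha)$ lies in $\cS$, so the product formula gives $h_\cS(f^{(m)}(\alpha)^{-1}) = h(f^{(m)}(\alpha)) = H$. Applying \Cref{eq:SPartBound2} to $f$ evaluated at $\beta = f^{(m-1)}(\alpha)$,
\[
H = h_\cS(f(\beta)^{-1}) < (1 - \eta_2(\K,f,\cS))(H+1),
\]
yielding $H < \eta_2(\K,f,\cS)^{-1}$. Using the iterative height bound $h(f^{(l)}(0)) \leq c_7(f)(\deg f)^l$, all parameters of $\cS$ are controlled by $(\deg f)^k$: $t \leq c_8(f)(\deg f)^k$, $\log P \leq c_8(f)(\deg f)^k$, and $\sum_i \log\log \Nm(\mathbf{p_i}) = O(k(\deg f)^k)$. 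Inserting these into the explicit formula for $\eta_2^{-1}$ from \Cref{thm:SPartBoundNF} and taking logarithms, every factor contributes at most $O(k(\deg f)^{k+1})$, giving
\[
\log \eta_2(\K,f,\cS)^{-1} \leq c_9(\K,f)\, k\, (\deg f)^{k+1}.
\]

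With $k\leq c_6 \log \loght$ we have $(\deg f)^k \leq \loght^{\,c_6\log\deg f}$. Set $\varepsilon = c_6 \log \deg f$ and choose $c_6 < 1/\log\deg f$ so that $\varepsilon < 1$; then the bound above becomes $\log \eta_2^{-1} = O(\loght^{\varepsilon}\log\loght) = o(\loght)$. Since $\loght = \log H$ for $H \geq e$, this contradicts $H < \eta_2^{-1}$ for all $\loght$ above some effective threshold $\loght_0(\K,f)$. Shrinking $c_6$ further to guarantee $k=0$ whenever $\loght < \loght_0$ closes the remaining range. The main obstacle is the quantitative bookkeeping in the displayed bound: one must confirm that the intricate expression for $\eta_2^{-1}$ has logarithm at most $O(k(\deg f)^{k+1})$, and in particular that its dependence on $t$ is only polynomial rather than doubly exponential. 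This is precisely why the sharper function $A_2$ (without the $v^v$ factor present in $A_1$) appears in \Cref{thm:SPartBoundNF}, as explicitly flagged as necessary for the present theorem.
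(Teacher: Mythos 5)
Your proposal is correct and follows essentially the same route as the paper: reduce the "no primitive prime in the window" hypothesis to the statement that every prime divisor of $f^{(m)}(\alpha)$ divides some $f^{(l)}(0)$ with $1\le l\le k$, form the corresponding set $\cS$, apply the $\eta_2$-bound of \Cref{thm:SPartBoundNF} to get $h(f^{(m)}(\alpha))<\eta_2(\K,f,\cS)^{-1}$, control the parameters of $\cS$ by $\degf^{k}$ via the height growth of $f^{(l)}(0)$, and choose $c_6$ small enough that this is contradictory for large heights while $k=0$ in the bounded range. The only differences are cosmetic (contradiction framing and a slightly lossier $O(k\degf^{k+1})$ in place of the paper's $O(\degf^{k})$ exponent, which does not affect the conclusion).
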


If, in addition, $\alpha$ is not preperiodic, then,
using \Cref{eq:hfnalphaIneqs}, \Cref{thm:WeakZsigmondy} also holds for
\[
    k(m, \alpha) = \floor{c_7(\K, f) \log m}
\]
for some effectively computable $c_7(\K, f) > 0$.

\section{Proof of \texorpdfstring{\Cref{thm:SPartBoundNF}}{Theorem~\ref{thm:SPartBoundNF}}}

\subsection{Preliminaries} 
As in the proof of~\cite[Theorem~2.10]{BEG}, the main tool
is~\cite[Theorem~3]{GyoryYu}.
We state the special case for 2 variables where it is
easy to state a sufficient condition for $F$ to be \textit{triangularly connected}.
We maintain the dependence on $\cS$; however, we omit the
explicit dependence on $\K$ and $F$.

Let $R$ be the regulator of $\K$.
    
In  \Cref{lem:BoundhDecomposableForm}  below, which is a simplified version of~\cite[Theorem~3]{GyoryYu},
 we have made use of the inequality
(see~\cite{BugeaudGyory}) 
\[
    R_\cS \leq \fh R \prod_{i=1}^{t} \log \Nm(\mathbf{p_i}), 
\]
where $\fh$ is the class number of $\K$ and   $R_\cS$ is the $\cS$-regulator of $\K$ 
(see~\cite{BugeaudGyory} for a definition,  it is the natural generalisation of the regulator to S-units).
In particular, we absorb $\fh, R$ into the constant $C_1(\K,F)$.

We recall that a binary form (that is, a homogeneous polynomial) 
$F \in \K[X,Y]$  is called   \textit{decomposable  over $\K$\/}, if 
$F$ factors into linear factors over $\K$.

\begin{lemma}
    \label{lem:BoundhDecomposableForm}
    Let $F \in \K[X,Y]$ be a decomposable form over $\K$ which has at least 3
    pairwise non-proportional linear factors.
    Let $\cS$ be a finite set of $s$ places of $\K$ containing all infinite places
    and let $t = \# \cS_0$.
    Let $P, Q, \sumloglogprimes$ be as defined in
    \Cref{eq:PQTdefn}.    Let $\beta \in \K \setminus \{0\}$.
    Then all solutions $(x_1, x_2) \in \o_\cS^2$ of
    \[
        F(x_1, x_2) = \beta
    \]
    satisfy
    \begin{equation}
        \label{eq:DecomposableBound1}
        \begin{split}
        h(x_1), h(x_2) &<
        C_1(\K,F)A_1(d,s)  \left(\log^* Q + h(\beta) \right)   \\
                       &\qquad\qquad\qquad \times 
        P  (1+ \sumloglogprimes/\log^* P)\prod_{i=1}^t \log \Nm(\mathbf{p_i})                     , 
     \end{split}
 \end{equation}
     and, for $t > 0$,
     \begin{equation}
        \label{eq:DecomposableBound2}
    \begin{split}
        h(x_1), h(x_2) &<
        C_2(\K,F)A_2(d\fh,t)  \left(\log^* Q + h(\beta) \right)   \\
                       &\qquad\qquad\qquad \times 
                       (P/\log^* P)\prod_{i=1}^t \log \Nm(\mathbf{p_i}),
     \end{split}
     \end{equation}
     where $A_1$ is defined as in \Cref{eq:A1defn}, $A_2$ is defined as in \Cref{eq:A2defn},
     $d = [\K : \Q]$, $\fh$ is the class number of $\K$ and  $C_1(\K,F)$, $C_2(\K,F)$
     are effectively computable constants.
\end{lemma}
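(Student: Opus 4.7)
The plan is to deduce this lemma as a direct specialization of \cite[Theorem~3]{GyoryYu}, which bounds the heights of solutions to decomposable form equations over $\cS$-integers in terms of the $\cS$-regulator $R_\cS$, the class number, the discriminant of the splitting field, and the primes in $\cS_0$. First I would recall the general setup there: one works with a decomposable form $F\in\K[X,Y]$ whose factorization into linear forms defines a graph (or hypergraph) on the factor set, and the conclusion of \cite[Theorem~3]{GyoryYu} requires $F$ to be \emph{triangularly connected}, that is, every pair of linear factors of $F$ lies in a triangle of pairwise non-proportional factors. Under our hypothesis that $F$ has at least $3$ pairwise non-proportional linear factors over $\K$, any three such factors form a single triangle and any additional factors are proportional to one of these three, so $F$ is automatically triangularly connected in the sense required. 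I would verify this as a short preliminary observation.

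Next I would simply invoke \cite[Theorem~3]{GyoryYu} applied to the equation $F(x_1,x_2)=\beta$ with the choice of $\cS$ given. This produces a bound of the shape
\[
h(x_1),\,h(x_2) \;\leq\; C\cdot A_i(\cdot,\cdot)\cdot R_\cS \cdot(\log^* Q + h(\beta))\cdot(\text{arithmetic factors in }\cS),
\]
where the two versions of \cite[Theorem~3]{GyoryYu} (with the $A_1$-type and $A_2$-type coefficients, the latter available only when $t>0$) account for the two estimates \Cref{eq:DecomposableBound1} and \Cref{eq:DecomposableBound2} claimed in our lemma. At this step all implicit constants depend only on $\K$, $F$ and the splitting field of $F$; since the splitting field is determined by $F$ and $\K$, I would absorb its regulator, class number and discriminant silently into the constants $C_1(\K,F), C_2(\K,F)$.

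Then, to pass from the formulation in \cite{GyoryYu} (which contains $R_\cS$) to the formulation in our lemma (which contains only $\prod_{i=1}^t \log\Nm(\mathbf{p}_i)$), I would apply the inequality
\[
R_\cS \;\leq\; \fh\, R \prod_{i=1}^{t} \log \Nm(\mathbf{p}_i),
\]
recalled from \cite{BugeaudGyory}, and absorb the factor $\fh R$, which depends only on $\K$, into $C_1(\K,F)$ and $C_2(\K,F)$. After this substitution, a line-by-line comparison of the remaining factors $P$, $\log^* P$, $\sumloglogprimes$, $\log^* Q$ and $\prod_i \log\Nm(\mathbf{p}_i)$ with the versions of $A_1$ and $A_2$ stated in \Cref{eq:A1defn} and \Cref{eq:A2defn} reproduces exactly the two displayed inequalities.

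The only slightly delicate point is the bookkeeping between the bound in \cite[Theorem~3]{GyoryYu} and ours: in particular, making sure that we correctly pair the $A_1$-shape estimate with the general case and the $A_2$-shape estimate with the case $t>0$, and that the polynomial factor in $s$ versus $t$ and the appearance of $\fh$ inside $A_2$ match the statement of \cite[Theorem~3]{GyoryYu} after our substitution of the $R_\cS$ bound. I expect this translation of parameters to be the main (purely notational) obstacle; no new analytic input is needed.
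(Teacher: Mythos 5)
Your proposal matches the paper's treatment exactly: the lemma is presented there as a simplified two-variable special case of \cite[Theorem~3]{GyoryYu}, with triangular connectedness guaranteed by the three pairwise non-proportional linear factors, and with the $\cS$-regulator eliminated via the bound $R_\cS \leq \fh R \prod_{i=1}^{t}\log\Nm(\mathbf{p_i})$ from \cite{BugeaudGyory}, absorbing $\fh$ and $R$ into $C_1(\K,F)$, $C_2(\K,F)$. The only nitpick is your remark that additional factors must be proportional to one of the three chosen ones (false in general), but the correct reason --- any three pairwise non-proportional binary linear forms are linearly dependent, hence form a triangle --- still gives triangular connectedness, so the argument stands.
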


We refer to~\cite{GyoryYu} for a fully explicit statement.
For the case $t > 0$
see also the recent improvement~\cite[Corollary 4]{Gyory}.

To adapt the proof of~\cite[Theorem~2.10]{BEG} to number fields,
we need a well known fact on the approximation of archimedean valuations by units.
To obtain explicit bounds, we first need~\cite[Lemma~1]{BugeaudGyory}
in the case where $\cS = \MK^{\infty}$
(see also~\cite[Lemma~2]{GyoryYu} for an alternative bound when the unit rank is at least 2).

\begin{lemma}
    \label{lem:UnitSystemBounds}
    Let $\K$ be a number field with unit rank at least 1.
    Then there exists a fundamental system of units
$\varepsilon_1, \ldots, \varepsilon_{r}$
such that
 \[
 \max\limits_{1 \leq i \leq r} h(\varepsilon_i) \leq A_3(\K) R,\]
 where
\[
    A_3(\K) = \frac{(r!)^2}{2^{r-1}d^{r}}\left(\frac{\delta_\K}{d}\right)^{1-r},
\]
where $d = [\K:\Q]$
and $\delta_\K$ is any positive constant such that every non-zero algebraic number $\alpha \in \K$
which is not a root of unity
satisfies $h(\alpha) \geq \delta_\K/d$.
\end{lemma}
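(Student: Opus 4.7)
The plan is to reduce the claim to a standard geometry-of-numbers argument applied to the logarithmic embedding of the unit group. First I would introduce the logarithmic map
\[
\ell : \o^* \to \R^{r+1}, \qquad \ell(\alpha) = \bigl(\ell_{v_1} \log|\alpha|_{v_1}, \ldots, \ell_{v_{r+1}} \log|\alpha|_{v_{r+1}}\bigr),
\]
where $v_1,\ldots,v_{r+1}$ are the archimedean places of $\K$. The product formula shows that $\ell(\o^*)$ lies in the hyperplane $H = \{x : \sum_i x_i = 0\}$; the kernel of $\ell$ is the torsion subgroup, so $\ell(\o^*)$ is a full rank-$r$ lattice $\Lambda \subset H$ whose covolume (with respect to the natural measure on $H$) is $\sqrt{r+1}\,R$ by definition of the regulator. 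The key observation is that, for any unit $\varepsilon$,
\[
h(\varepsilon) = \frac{1}{d}\sum_{v \in \MK^\infty} \ell_v \log^+|\varepsilon|_v = \frac{1}{2d}\|\ell(\varepsilon)\|_1,
\]
using $\log^+ x + \log^+ x^{-1} = |\log x|$ together with the product formula, so the logarithmic height of a unit is, up to the factor $1/(2d)$, the $\ell^1$-norm of its image in $H$.

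Next I would analyse the successive minima $\lambda_1 \leq \cdots \leq \lambda_r$ of $\Lambda$ with respect to $\|\cdot\|_1$. The hypothesis on $\delta_\K$ translates directly into a lower bound $\lambda_1 \geq 2\delta_\K$, since any lattice point $\ell(\varepsilon)$ with $\varepsilon$ not a root of unity satisfies $\|\ell(\varepsilon)\|_1 = 2d\,h(\varepsilon) \geq 2\delta_\K$. Minkowski's second theorem, applied to $\Lambda$ in $H$ using the $\ell^1$ unit ball (whose volume in $\R^r$ is $2^r/r!$), yields
\[
\lambda_1 \cdots \lambda_r \leq \frac{2^r}{\mathrm{vol}(B_1^{r})}\,\mathrm{covol}(\Lambda) \cdot \frac{1}{r!} \cdot r! \;\asymp\; r!\,R
\]
up to absolute normalisation constants, so that
\[
\lambda_r \leq \frac{r!\,R}{\lambda_1^{r-1}} \leq r!\,R\,(2\delta_\K)^{1-r}.
\]

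Then I would invoke the classical result of Mahler (see, e.g., Cassels) that any lattice of rank $r$ admits a basis $w_1,\ldots,w_r$ with $\|w_i\| \leq \max(1, i/2)\,\lambda_i$, so in particular $\|w_i\|_1 \leq (r/2)\lambda_r$ for all $i$. Lifting $w_i$ back to a fundamental system of units $\varepsilon_1,\ldots,\varepsilon_r$ via $\ell$, the identity $h(\varepsilon_i) = \|\ell(\varepsilon_i)\|_1/(2d)$ converts the bound on $\|w_i\|_1$ into a bound on $h(\varepsilon_i)$, giving an estimate of the shape $h(\varepsilon_i) \leq c(r,d)\,R\,(\delta_\K/d)^{1-r}$. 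The final step is purely bookkeeping: tracking the various powers of $r$, $d$, and $2$ through Minkowski's second theorem, the volume of the $\ell^1$ ball, and Mahler's basis extraction, and verifying that the constants collapse to exactly $A_3(\K) = (r!)^2/(2^{r-1}d^r) \cdot (\delta_\K/d)^{1-r}$.

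The main obstacle I anticipate is not conceptual but computational: aligning the normalisations of the measure on $H$, the choice of norm on $\R^{r+1}$, and the successive constants produced by Minkowski and Mahler so that the final coefficient matches the stated $A_3(\K)$. A secondary subtlety is ensuring that $\lambda_1 \geq 2\delta_\K$ uses the correct normalisation of $h$ versus $\|\ell(\cdot)\|_1$, which is the reason for the factor $(\delta_\K/d)^{1-r}$ rather than $\delta_\K^{1-r}$.
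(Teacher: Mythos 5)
The paper does not actually prove this lemma: it is quoted directly from~\cite[Lemma~1]{BugeaudGyory} (specialised to $\cS=\MK^{\infty}$), so there is no in-paper argument to compare against. That said, your outline is essentially the standard proof of the cited result, and the strategy is sound: the identity $h(\varepsilon)=\tfrac{1}{2d}\|\ell(\varepsilon)\|_1$ for units, the covolume $\sqrt{r+1}\,R$ of the unit lattice in the trace-zero hyperplane, the lower bound $\lambda_1\geq 2\delta_\K$, Minkowski's second theorem, and the Mahler--Cassels basis lemma $\|w_i\|\leq\max(1,i/2)\lambda_i$ are exactly the right ingredients. Two caveats. First, the quantitative step you rightly flag as the ``main obstacle'' is a genuine one: the relevant body for Minkowski's second theorem is the cross-section of the $\ell^1$-ball of $\R^{r+1}$ by the hyperplane $H$, whose $r$-dimensional volume is \emph{not} $2^r/r!$, and the ratio of that volume to the factor $\sqrt{r+1}$ in the covolume must be computed before one can claim the constant is exactly $A_3(\K)$; as written, your bound is only established up to an unspecified absolute factor, which proves a weaker statement than the lemma (harmless for the paper's effective constants, but not literally the lemma). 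Second, the cleaner route to the \emph{stated} constant --- and the one used in~\cite{BugeaudGyory} --- is to first prove the product bound $\prod_{i=1}^r h(\varepsilon_i)\leq \frac{(r!)^2}{2^{r-1}d^r}R$ (combining $\prod_i\max(1,i/2)=r!/2^{r-1}$ with Minkowski), and only then divide by $h(\varepsilon_j)\geq\delta_\K/d$ for the $r-1$ indices $j\neq i$; this is precisely where the factor $(\delta_\K/d)^{1-r}$ comes from, whereas your route through $\lambda_r\leq(\prod_j\lambda_j)/\lambda_1^{r-1}$ followed by $\|w_i\|\leq(r/2)\lambda_r$ yields a comparable but not identical coefficient that would still need to be checked against $A_3(\K)$.
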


A result of Voutier~\cite{Voutier} states that we can take 
\[
    \delta_\K = \begin{cases}
    \displaystyle{\frac{\log 2}{d}}  &\text{if $d = 1, 2$}, \\
       \displaystyle{ \frac{1}{4}\left(\frac{\log \log d}{\log d}\right)^3 } &\text{if $d \geq 3$}, 
    \end{cases}
\]
in \Cref{lem:UnitSystemBounds}.

In the following result, little effort has been made to optimise the
right hand side as it suffices for our results that it is effectively computable
in terms of $\K$. In fact, it is essentially established in the proof of~\cite[Lemma~2]{BugeaudGyory}
(see also~\cite[Lemma~3]{GyoryYu}); however, for the sake of completeness,  we give a short proof. 

\begin{lemma}
    \label{lem:UnitExistsHtNm}
    For every $\alpha \in \o \setminus \{0\}$ and for every integer $n \geq 1$
    there exists an $\varepsilon \in \o^*$ such that
    \begin{equation}
        \label{eq:UnitHtNm}
        \abs*{\log \abs{\varepsilon^n \alpha}_{v} -
        \frac{1}{d} \log(\Nm(\alpha))} \leq \frac{1}{2}A_3(\K)nd^2R
    \end{equation}
    for all $v\in \MK^{\infty}$ with $A_3(\K)$ as in \Cref{lem:UnitSystemBounds}
    and $d = [\K : \Q]$.
\end{lemma}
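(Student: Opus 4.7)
The approach is a standard Dirichlet-type approximation: I choose $\varepsilon$ to balance the archimedean contributions of $\alpha$ so that $|\varepsilon^n\alpha|_v$ is close to $\Nm(\alpha)^{1/d}$ at every infinite place. Setting $\lambda_v(x) := \ell_v \log|x|_v$ and $\mu_v := \lambda_v(\alpha) - \frac{\ell_v}{d}\log\Nm(\alpha)$ for $v \in \MK^\infty$, the target~\eqref{eq:UnitHtNm} becomes a statement about making $|n\lambda_v(\varepsilon) + \mu_v|$ small. The first observation will be that $\sum_{v\in\MK^\infty}\mu_v = 0$, which follows from~\eqref{eq:sum l/d} together with the identity $\prod_{v\in\MK^\infty}|\alpha|_v^{\ell_v} = \Nm(\alpha)$ (obtained by splitting the product formula, using $\alpha \in \o$ to collapse the finite-place part to $\Nm(\alpha)^{-1}$).

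Next I would dispose of the trivial case of unit rank $r = \#\MK^\infty - 1 = 0$: here $\MK^\infty$ is a singleton, the vanishing of $\sum_v \mu_v$ already forces $\mu_v = 0$, and $\varepsilon = 1$ works. Otherwise, by Dirichlet's unit theorem the logarithmic images $\lambda(\varepsilon_1),\ldots,\lambda(\varepsilon_r)$ of the fundamental system supplied by \Cref{lem:UnitSystemBounds} form an $\R$-basis of the hyperplane $H = \{x \in \R^{\MK^\infty} : \sum_v x_v = 0\}$. Since $\mu := (\mu_v)_v \in H$, I would write $-\mu/n = \sum_{i=1}^{r} b_i \lambda(\varepsilon_i)$ uniquely with $b_i \in \R$, round each $b_i$ to the nearest integer $a_i$, and set $\varepsilon := \prod_i \varepsilon_i^{a_i} \in \o^*$.

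Plugging this in gives
\[
n\lambda_v(\varepsilon) + \mu_v = n\sum_{i=1}^{r}(a_i - b_i)\lambda_v(\varepsilon_i),
\]
so $|n\lambda_v(\varepsilon) + \mu_v| \le \frac{nr}{2}\max_i |\lambda_v(\varepsilon_i)|$. For each unit $\varepsilon_i$, splitting the identity $\sum_{w\in\MK^\infty} \ell_w \log|\varepsilon_i|_w = 0$ into positive and negative parts yields $|\lambda_v(\varepsilon_i)| \le d\,h(\varepsilon_i) \le d\,A_3(\K)R$ by \Cref{lem:UnitSystemBounds}. Dividing through by $\ell_v \ge 1$ and using $r \le d-1$ then recovers precisely the constant $\tfrac{1}{2}A_3(\K)nd^2R$ stated in~\eqref{eq:UnitHtNm}.

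The ingredients are all in place in the preceding lemmas, so the real work here is essentially bookkeeping. The one step that requires a little care is the bound $|\lambda_v(\varepsilon_i)| \le d\,h(\varepsilon_i)$ for units, because the definition of $h$ uses $\log^+$ rather than $|\log|$; the trick is that for units the archimedean contributions already sum to zero, so the positive and negative parts of $\lambda_v$ are equal in aggregate and the pointwise value at any single place is controlled by the common sum $d\,h(\varepsilon_i)$.
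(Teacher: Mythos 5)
Your proposal is correct and follows essentially the same route as the paper: express the normalized archimedean vector $(\mu_v)_v$ in the Dirichlet-unit-theorem basis coming from \Cref{lem:UnitSystemBounds}, round the coefficients so the residual exponents lie in an interval of length $1$ (equivalently, the paper rounds to the nearest multiple of $n$ before dividing), and bound each $|\log|\varepsilon_i|_v|$ by $d\,h(\varepsilon_i)\le d\,A_3(\K)R$ via the archimedean product formula for units. The step you flag as delicate — that $\log^+$ versus $|\log|$ is handled by the vanishing of $\sum_{v\in\MK^\infty}\ell_v\log|\varepsilon_i|_v$ — is exactly the point the paper also relies on.
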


\begin{proof}
    For this proof let $\MK^\infty  = \{v_1, \ldots, v_{r+1}\}$.
    Since the case $r = 0$ is trivial, we henceforth assume that $r \geq 1$.
    Let $\varepsilon_1, \ldots, \varepsilon_{r}$ be a fundamental system of units satisfying
    the inequalities of  \Cref{lem:UnitSystemBounds}.

    We note that by the Dirichlet Unit Theorem (see, for example,~\cite[Theorem~I.7.3]{Neukirch}),
    the columns of the $(r + 1) \times r$ matrix $M$ with
    \[
        M_{i,j} = \ell_{v_i} \log \abs{\varepsilon_j}_{v_i}
    \]
    (where as before $\ell_v = [\K_v : \Q_v]$ for $v\in \MK$) 
    form a basis for the hyperplane in $\R^{r+1}$
    of vectors whose coordinates sum to 0.

    Let $\mathbf{v}$ be the
    column vector of dimension $r+1$, where
    \[
        (\mathbf{v})_i = \ell_{v_i} \log\(\Nm(\alpha)^{-1/d}\abs{\alpha}_{v_i}\), \qquad i =1, \ldots, r+1.
    \]
    Then there exists a unique vector $\mathbf{x} = (x_1, \ldots, x_{r})^T$ such that
    \[
        M \mathbf{x} = \mathbf{v}.
    \]
    For each $i=1, \ldots, r$, we write
    \[
        x_i = ny_i + z_i,\quad y_i, z_i \in \Z, \ z_i \in \left(-\frac{n}{2}, \frac{n}{2}\right].
    \]

    Let $\varepsilon = \varepsilon_1^{-y_1}\cdots\varepsilon_{r}^{-y_{r}}$.
    Then, for all $v_i$,
    \[
        \log \abs{\varepsilon_1^{z_1} \cdots \varepsilon_{r}^{z_{r}}}_{v_i}
        =
        \log \abs{\varepsilon^n\alpha}_{v_i} -
        \frac{1}{d}\log(\Nm(\alpha)).
    \]
    For each $j=1, \ldots, r$, by  \Cref{lem:UnitSystemBounds}, we have
    \[
        \abs{z_j \log \abs{\varepsilon_j}_{v_i}}
        \leq \frac{n}{2} d h(\varepsilon_j)
        \leq \frac{n}{2} d A_3(\K) R
    \]
 and summing over $j=1, \ldots, r$ yields the desired statement.
\end{proof}

We now have all the tools we need for the proof of
\Cref{thm:SPartBoundNF}.

\subsection{Concluding the proof} 
    We will first prove \Cref{eq:SPartBoundGeneric} holds
    assuming that $f$ splits in $\K$.

    Let $F(X,Y)$ be the homogenisation of $f$, that is, 
\[
  F(X,Y) = Y^\degf f(X/Y),
\] 
where     $\degf = \deg f$. 
    Then $F$ is a decomposable form in $\K$ with $F(x,1) = f(x)$.

    Suppose $x \in \o$ and  $f(x) \neq 0$.
    Let
    $b = F(x,1) = f(x)$.
    We can write $[b]$ uniquely in the form
    \begin{equation}
        \label{eq:bIdealDecomp}
        [b] = \mathbf{p_1}^{b_1}\ldots\mathbf{p_t}^{b_t}\mathbf{a},
    \end{equation}
    where $\mathbf{a}$ is an ideal coprime to $\mathbf{p_1},\ldots,\mathbf{p_t}$
    and $b_i = \ord_{\mathbf{p_i}}\,  b$, $i=1, \ldots, t$.

    Decompose each $b_i$ (uniquely) as
    \[
        b_i = \degf\fh q_i+ r_i,
    \]
    where $\fh$ is the class number of $\K$
    and $q_i, r_i \in \Z_{\geq 0}$, $r_i < \degf\fh$.

    For each $i \in [1, t]$, define $p_i \in \o$
    to be any generator of $\mathbf{p_i}^{\fh}$
    (which is a principal ideal).

    Now let
    \begin{equation}
        \label{eq:Defc}
        c = F\left(\frac{x}{p_1^{q_1}\ldots p_t^{q_t}}, \frac{1}{p_1^{q_1}\ldots p_t^{q_t}}\right)
        = \frac{b}{p_1^{\degf q_1} \ldots p_t^{\degf q_t}},
    \end{equation}
    so that
    \begin{equation}
        \label{eq:cideal}
        [c] = \mathbf{p_1}^{r_1}\ldots\mathbf{p_t}^{r_t}\mathbf{a}.
    \end{equation}

We now apply \Cref{lem:UnitExistsHtNm} to $c$ and
    let $\varepsilon \in \o^*$ be any unit
    satisfying \Cref{eq:UnitHtNm} where
    $\alpha$ is replaced by $c$
    and $n $ by $\degf$.

    Multiplying the arguments of $F$ by $\varepsilon$ we get
    \[
        F\left(\frac{\varepsilon x}{p_1^{q_1}\ldots p_t^{q_t}},
        \frac{\varepsilon}{p_1^{q_1}\ldots p_t^{q_t}}\right)
        = \varepsilon^\degf c
    \]
    and applying
    \Cref{lem:BoundhDecomposableForm}
    we obtain the inequality
\begin{equation} 
\begin{split}
        \label{eq:hLHSRHSineq}
        h(\varepsilon/p_1^{q_1}\ldots p_t^{q_t}) &<
        C_1(\K,f)A_1(d,s) \left(\log^* Q + h(\varepsilon^\degf c) \right)   \\
                                                   & \qquad \qquad
                                                   \times P (1+ \sumloglogprimes/\log^* P)\prod_{i=1}^t \log \Nm(\mathbf{p_i}).
\end{split}
\end{equation}

    We separately lower bound $h(\varepsilon/p_1^{q_1}\ldots p_t^{q_t})$
    and upper bound $h(\varepsilon^\degf c)$.

    \subsubsection*{--- Lower bound on $h(\varepsilon/p_1^{q_1}\ldots p_t^{q_t})$:}\quad 
    Since $\varepsilon/p_1^{q_1}\ldots p_t^{q_t}$ is an $\cS$-integer
    \begin{equation} 
\begin{split}
        \label{eq:hvarepsilon}
        h(\varepsilon/p_1^{q_1}\ldots p_t^{q_t}) =
                                                   &\sum_{\substack{v_i \in \MK^0 \cap \cS}}
                                                   \frac{\ell_{v_i}}{d} \log^+\abs{\varepsilon/p_1^{q_1}\ldots p_t^{q_t}}_{v_i} \\
                                                   &+ \sum_{v_i \in \MK^\infty} \frac{\ell_{v_i}}{d} \log^+\abs{\varepsilon/p_1^{q_1}\ldots p_t^{q_t}}_{v_i}.
\end{split}
\end{equation}
    From~\Cref{eq:Defc},
    we have that for all $v_i$
\[
\degf\log \abs{\varepsilon/p_1^{q_1}\ldots p_t^{q_t}}_{v_i}
        - \log \left(\abs{\varepsilon^\degf c}_{v_i}\right)
        = \log \left(\abs{b}_{v_i}^{-1}\right). 
 \]

    If $v_i \in \MK^0 \cap \cS$, by direct calculation we get
    \begin{equation}
        \label{eq:viFiniteLowerBound}
        \log^+ \abs{\varepsilon/p_1^{q_1}\ldots p_t^{q_t}}_{v_i}
        > \frac{1}{\degf}\log^+ (\abs{b}_{v_i}^{-1})
        - \frac{\fh}{e_{v_i}} \log \underprime_i
\end{equation}
    (where $\underprime_i$ is the prime in $\Z$ that $\mathbf{p_i}$ lies over
    and $e_{v_i}$ is the ramification index of $v_i$, $i =1, \ldots, t$.).

    If $v_i \in \MK^{\infty}$, using the bound of \Cref{lem:UnitExistsHtNm} and dividing by $\degf$ we get
\[
        \log \abs{\varepsilon/p_1^{q_1}\ldots p_t^{q_t}}_{v_i} \geq
        \frac{1}{\degf}\left(\log \left(\abs{b}_{v_i}^{-1}\right) + \frac{1}{d} \log(\Nm(c))\right) - \frac{1}{2} A_3(\K) d^2R. 
\]
    Hence
    \begin{equation} 
\begin{split}
            \label{eq:viInfiniteLowerBound}
        \log^+ \abs{\varepsilon/p_1^{q_1}\ldots p_t^{q_t}}_{v_i}
        &\geq
        \frac{1}{\degf}\log^+ \left(\abs{b}_{v_i}^{-1}\right) -  \frac{1}{2} A_3(\K) d^2R.
        \end{split}
\end{equation}
    
    Substituting \Cref{eq:viFiniteLowerBound} and \Cref{eq:viInfiniteLowerBound}
    into \Cref{eq:hvarepsilon} and using the trivial bound $ e_{v_i} \ge 1$
    and~\Cref{eq:sum l/d}  we obtain
    \begin{equation}
        \label{eq:LHSLowerBoundOnh}
        h(\varepsilon/p_1^{q_1}\ldots p_t^{q_t}) \geq
        \frac{1}{\degf} h_\cS(b^{-1})
        - \fh \log Q -   \frac{1}{2} A_3(\K) d^2R, 
    \end{equation}  
    where again we let 
    \[
    Q = \Nm(\mathbf{p_1}\cdots \mathbf{p_t}) \ge \underprime_1 \ldots \underprime_t.
    \] 

    \subsubsection*{--- Upper bound on $h(\varepsilon^\degf c)$:} \quad 
    Since $\varepsilon^\degf c \in \o$ we have
    \[
        h(\varepsilon^\degf c) = \sum_{v_i \in \MK^\infty} \frac{\ell_{v_i}}{d} \log^+ \abs{\varepsilon^\degf c}_{v_i}.
    \]
    From \Cref{eq:sum l/d} and \Cref{eq:UnitHtNm}
        we obtain
    \[
        h(\varepsilon^\degf c) \leq \frac{1}{d} \log (\Nm(c)) + \frac{1}{2} A_3(\K) \degf d^2 R.
    \]
    Since $r_i < \degf\fh$, from \Cref{eq:cideal} we get that
    \begin{equation}
        \label{eq:Proofhenc1dlog}
        h(\varepsilon^\degf c) \leq \frac{1}{d} \log (\Nm(\mathbf{a}))
        + \frac{\degf\fh}{d} \log Q
        + \frac{1}{2} A_3(\K) \degf d^2 R,
    \end{equation}
    where again we let $Q = \Nm(\mathbf{p_1}\cdots \mathbf{p_t})$.

    By definition $\ord_{\mathbf{p_i}}\, \mathbf{a} = 0$ 
    for all finite valuations in $\cS$.
    Substituting \Cref{eq:bIdealDecomp} into \Cref{eq:Proofhenc1dlog}
    we obtain the upper bound
    \begin{equation} 
\begin{split}
        \label{eq:RHSUpperBoundOnh}
        h(\varepsilon^\degf c) &\leq h_{\MK \setminus \cS}(b^{-1})
        + \frac{\degf\fh}{d} \log Q
        + \frac{1}{2} A_3(\K) \degf d^2 R.
\end{split}
\end{equation}

    \subsubsection*{--- Combining the bounds:} \quad 
    Substituting \Cref{eq:LHSLowerBoundOnh} and \Cref{eq:RHSUpperBoundOnh}
    into \Cref{eq:hLHSRHSineq} we obtain
    \begin{align*}
             h_\cS(b^{-1}) < C_3(\K, f)A_1(d,s) &
             \left( 
             \log^* Q +
             h_{\MK \setminus \cS}(b^{-1}) \right) \\
             &  \times
             P  (1+ \sumloglogprimes/\log^* P)\prod_{i=1}^t \log \Nm(\mathbf{p_i}), 
    \end{align*}
    where $C_3(\K, f)$  is an effectively computable constant.

    Noting that $\log Q \leq t \log P$
    we can simplify to get
    \[
         h_\cS(b^{-1})< C_3(\K,f) A_4(d,\cS)
         \left(1
         +
         h_{\MK \setminus \cS}(b^{-1})
         \right),
     \]
    where
    \[
             A_4(d,\cS) =  A_1(d,s) \max\{1,t\}  P  (\log^* P+ \sumloglogprimes)\prod_{i=1}^t \log \Nm(\mathbf{p_i}). 
    \]
Using
    \[
        h(b) = h(b^{-1}) = h_{\MK \setminus \cS}(b^{-1}) + h_\cS(b^{-1})
    \]
    we now arrive to 
    \begin{equation}
        \label{eq:hSbFinalBound}
         h_\cS(b^{-1})
         <
         \frac{ C_3(\K,f)  A_4(d,\cS)}
         {1+  C_3(\K,f)  A_4(d,\cS)}
         \left(1
         +
         h(b)
         \right), 
    \end{equation}
    concluding the proof of \Cref{eq:SPartBoundGeneric}
    in the case where $f$ splits in $\K$.

    \subsubsection*{--- Proving \Cref{eq:SPartBoundGeneric}:} \quad
    Now, suppose that $f$ does not split in $\K$.
    Let $\L$ be the splitting field of $f$ over $\K$
    and let $\cT$ be the set of places in $\L$ lying over $\cS$.

    Then \Cref{eq:hSbFinalBound} holds in $\L$
    where we replace $\cS$ by $\cT$.
    For ease of notation, we  introduce the subscript $\L$  
    when talking about constants defined in terms of $\L$ (some of them also depend on $\cS$).
    In particular,  $d_\L =   [\L : \Q]$, $s_\L = \# \cT$ and so on.     
    Let $\dD = [\L : \K]$.

We note that 
    \begin{equation}
    \begin{split}
        \label{eq:NewParam-L}
& \qquad d_\L = \dD d, \quad t_{\L} \leq  \dD t, \quad s_{\L} \leq \dD s, \\ 
& \quad P_\L \leq P^{\dD}, \qquad 
 \sumloglogprimes_\L \leq \dD \sumloglogprimes + C_4(\K,f), \\   
 &    \prod_{\mathbf{q} \in \cT_0} \log \Nm(\mathbf{q}) <
        C_5(\K, f) \prod_{\mathbf{p} \in \cS_0} (\log \Nm(\mathbf{p}))^{\dD},
    \end{split}
    \end{equation}
where $C_4(\K,f), C_5(\K,f)$ are effective constants that depend on $\dD$ and the number of prime ideals of $\K$ with norm less than $e^e$.

    We also note that heights are independent of extension
    in the sense that if $b \in \K$, then
    \[
        h(b) = h_\L(b),
        \mand
        h_\cS(b^{-1}) = h_\cT(b^{-1}).
    \]
    
    Using 
    \Cref{eq:hSbFinalBound} with $\K$ replaced by $\L$ and other parameters replaced 
    by the upper bounds in~\Cref{eq:NewParam-L} we conclude the proof of \Cref{eq:SPartBoundGeneric}.

    \subsubsection*{--- Proving \Cref{eq:SPartBound2}:} \quad
    This is the same proof as above, except using
    \Cref{eq:DecomposableBound2} in place of \Cref{eq:DecomposableBound1} in the derivation of
    \Cref{eq:hLHSRHSineq}.

    \section{Proofs of \texorpdfstring{\Cref{thm:MultDepNorthcottNF,thm:MultDepGeneralThm}}{Theorems~\ref{thm:MultDepNorthcottNF} and~\ref{thm:MultDepGeneralThm}}}

\subsection{Dynamical canonical height function}
We introduce the dynamical canonical height function
which is useful in the proofs of
\Cref{thm:MultDepNorthcottNF,thm:MultDepGeneralThm}.

The following  result is standard and proofs of its statements can be found in~\cite[Section~3.4]{Silv};
see also~\cite[Remark~B.2.7]{HindrySilverman} and~\cite[Proposition~3.2]{Zan}
regarding the effectiveness of the result.

\begin{lemma}
    \label{lem:canonicalheights}
    For a fixed $f \in \K(X)$ with $\degf = \deg f \geq 2$
    there exists a function $\cht_f: \K \to [0, \infty)$
    such that:
    \begin{enumerate}[(a)] 
\item There is an effectively computable constant $C_6(\K,f) $ such that             \[
                \abs{\cht_f(\alpha) - h(\alpha)} < C_6(\K,f), 
            \]
            for all $\alpha \in \K$.
        \item For all $\alpha \in \K$ we have
            \[
                \cht_f(f(\alpha)) = \degf\cht_f(\alpha).
            \]
        \item For all $\alpha \in \K$ we have
            \[
                \cht_f(\alpha) = 0 \iff \alpha \in \PrePer(f).
            \]
    \end{enumerate}
\end{lemma}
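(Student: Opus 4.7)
The plan is to construct $\cht_f$ via Tate's classical limiting procedure, defining
\[
\cht_f(\alpha) = \lim_{n \to \infty} \frac{h(f^{(n)}(\alpha))}{\degf^n}.
\]
The key ingredient is an effective functional inequality of the form $\abs{h(f(\beta)) - \degf \cdot h(\beta)} \leq C$ valid for all $\beta \in \K$, where $C = C(\K,f)$ is effectively computable from the coefficients of $f$ and the degree $\degf$. For a polynomial $f$ this follows directly from writing $f(\beta) = a_0 + a_1 \beta + \cdots + a_{\degf}\beta^{\degf}$ and applying the standard valuation-by-valuation estimates for the height; for a general rational function one uses the functoriality of the height on $\mathbb{P}^1$ together with explicit bounds involving the resultant of the homogenisation of $f$. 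The crucial point is that the resulting constant $C$ is effective.

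Once this inequality is in hand, set $a_n = \degf^{-n} h(f^{(n)}(\alpha))$. The functional bound yields $\abs{a_{n+1} - a_n} \leq C\degf^{-(n+1)}$, so $\{a_n\}$ is a Cauchy sequence and the limit defining $\cht_f(\alpha)$ exists. Part~(b) is then immediate, since
\[
\cht_f(f(\alpha)) = \lim_{n \to \infty} \frac{h(f^{(n+1)}(\alpha))}{\degf^n} = \degf \cdot \cht_f(\alpha).
\]
Part~(a) follows by telescoping:
\[
\abs{\cht_f(\alpha) - h(\alpha)} \leq \sum_{n\geq 0} \abs{a_{n+1} - a_n} \leq \sum_{n\geq 0} \frac{C}{\degf^{n+1}} = \frac{C}{\degf - 1},
\]
so one may take $C_6(\K,f) = C/(\degf - 1)$.

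For part~(c), if $\alpha \in \PrePer(f)$ then the orbit $\{f^{(n)}(\alpha) : n \geq 0\}$ is finite, so $h(f^{(n)}(\alpha))$ is bounded and $a_n \to 0$. Conversely, if $\cht_f(\alpha) = 0$, then by (b) we have $\cht_f(f^{(n)}(\alpha)) = \degf^n \cht_f(\alpha) = 0$ for every $n \geq 0$, whence by (a), $h(f^{(n)}(\alpha)) \leq C_6(\K,f)$ uniformly in $n$. Since all these values lie in the fixed number field $\K$, Northcott's theorem forces the orbit to be finite, so $\alpha$ is preperiodic.

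The only substantive step is the effectivity of the comparison $\abs{h(f(\beta)) - \degf h(\beta)} \leq C$; the remaining pieces (Cauchy convergence, telescoping, and Northcott's theorem) are completely soft. Since $f$ has coefficients in $\o$ of explicit height, standard height machinery produces such a $C$ effectively, yielding an effective $C_6(\K,f)$ as required.
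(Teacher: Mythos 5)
Your proposal is correct and is precisely the standard Tate limiting argument that the paper relies on: the paper gives no proof of its own but cites~\cite[Section~3.4]{Silv} (with~\cite[Remark~B.2.7]{HindrySilverman} and~\cite[Proposition~3.2]{Zan} for effectivity), and those references establish the lemma exactly by the telescoping construction, effective comparison $\abs{h(f(\beta))-\degf\, h(\beta)}\le C$, and Northcott's theorem that you describe.
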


As a consequence, there exists an effectively computable constant $C_7(\K,f)$
such that for all $\ell \in \Z$, $\ell  \geq 0$ and $\alpha \in \K$, $h(\alpha) > C_7(\K,f)$,
\begin{equation}
    \label{eq:hfnalphaIneqs}
    \degf^\ell C_7(\K,f) h(\alpha) > h(f^{(\ell )}(\alpha)) > \degf^\ell  C_7(\K,f)^{-1}h(\alpha).
\end{equation}

\subsection{Proof of \texorpdfstring{\Cref{thm:MultDepNorthcottNF}}{Theorem~\ref{thm:MultDepNorthcottNF}}}
We first prove~\Cref{eq:MultDepNorthcottBound1}.
Suppose $\alpha$ satisfies \Cref{eq:mnuIntegers}.
    Let $\degf = \deg f$.
    We assume that
    \begin{equation}
        \label{eq:halpha large}
        h(\alpha) >
        \max\left\{
        \frac{\degf+1}{\degf-1} C_6(\K,f),
        \ 
        2\eta_1(\K,f,\cS)^{-1}
        \right\}
    \end{equation}
    with $C_6(\K,f)$ as in \Cref{lem:canonicalheights}~(a) and
    $\eta_1(\K,f,\cS)$ as in \Cref{thm:SPartBoundNF}.

    The first term in the maximum on the right hand side of~\Cref{eq:halpha large}, along with \Cref{lem:canonicalheights}~(a) and~(b), ensures that
    \begin{equation}
        \label{eq:hfalphabigger}
        \begin{split} 
        h(f(\alpha))&  >  \cht_f(f(\alpha))   - C_6(\K,f) = \degf \cht_f(\alpha)   - C_6(\K,f) \\
        &  >  \degf  h(\alpha) - (\degf+1)  C_6(\K,f) >  h(\alpha).
        \end{split}
    \end{equation}  
                        
    The second term in the maximum in~\Cref{eq:halpha large} and \Cref{thm:SPartBoundNF}, along with~\Cref{eq:hfalphabigger}, implies that
    \begin{equation}
        \label{eq:hMkSlowerbound}
        \begin{split}
            h_{\MK \setminus \cS}(f^{(l)}(\alpha)^{-1})
            &> \eta_1(\K,f,\cS) h(f^{(l)}(\alpha)) - 1 \\
            &> \frac{\eta_1(\K,f,\cS)}{2} h(f^{(l)}(\alpha)), 
        \end{split}
    \end{equation}
    for any iterate $f^{(l)}(\alpha)$ with $l \geq 1$.

    For any $\alpha \in \o$, we  write $\yideal_\cS(\alpha)$ to mean
    the $\cS$-free part of $[\alpha]$, that is, the ideal 
    \[
        \yideal_\cS(\alpha) = \frac{[\alpha]}{\prod_{\fp \in \cS_0} \fp^{\ord_{\fp}\, \alpha}}.
    \]
    We now write $[f^{(m)}(\alpha)] = \fa \cdot \fb$ where
    \[\fa = \yideal_\cS(f^{(m)}(\alpha)) \mand
    \fb = \prod_{\fp \in \cS_0} \fp^{\ord_{\fp}\, f^{(m)}(\alpha)}.
    \]
    Observe that \Cref{eq:mnuIntegers} implies that $\fa \mid f^{(n)}(\alpha)$.
Setting
\[
k = m-n > 0, 
\]
we write
\[
    f^{(k)}(f^{(n)}(\alpha)) = f^{(m)}(\alpha)
\]
which, with the above observation,
implies that $\fa \mid f^{(k)}(0)$.

Since $0$ is not a periodic point, we have $f^{(k)}(0) \ne 0$ and
combining the above observation
with the notation in \Cref{lem:canonicalheights}
we obtain
\begin{equation} 
\begin{split}
    \label{eq:hMkShfk0}
    h_{\MK \setminus \cS} (f^{(m)}(\alpha)^{-1})
    &\leq h_{\MK \setminus \cS} (f^{(k)}(0)^{-1})
    \\
    &\leq h(f^{(k)}(0)^{-1})
    = h(f^{(k)}(0))
   \\
    &< \degf^k \cht_f(0) + C_6(\K,f).
\end{split}
\end{equation}

On the other hand, \Cref{eq:hMkSlowerbound} along with \Cref{lem:canonicalheights}~(a) 
implies that
\begin{equation}
    \label{eq:hMkSfmaGThfma}
    h_{\MK \setminus \cS}(f^{(m)}(\alpha)^{-1})
    > \frac{\eta_1(\K,f,\cS)}{2} (\degf^m \cht_f(\alpha) - C_6(\K,f)).
\end{equation}

Comparing \Cref{eq:hMkShfk0} and \Cref{eq:hMkSfmaGThfma},
since $k \leq m$,
we obtain
\[
    \cht_f(\alpha) < 2 \eta_1(\K,f,\cS)^{-1} \cht_f(0)
    + \frac{2\eta_1(\K,f,\cS)^{-1} + 1}{\degf^m}C_6(\K,f).
\]
As $\cht_f(0) < C_6(\K, f)$
we obtain the upper bound
\[
    h(\alpha) < \left(2\eta_1(\K,f,\cS)^{-1} + 1\right)\left(1 + \frac{1}{\degf^m}\right)C_6(\K,f), 
\]
as required.

The proof for~\Cref{eq:MultDepNorthcottBound2} is the same, except with $\eta_2$ instead of $\eta_1$ throughout.

\subsection{Proof of \texorpdfstring{\Cref{thm:MultDepGeneralThm}}{Theorem~\ref{thm:MultDepGeneralThm}}}

First we establish the result when one of $r$ or $s$ is equal to 0.
We obtain an explicit dependence on $\cS$ which may be interesting in its own right.
In particular, this gives a somewhat explicit version of~\cite[Proposition~1.5(a)]{Kriegeretal}.

More generally, an explicit version of \Cref{lem:falphainSunits} for $f(z) \in \K(z)$ can be derived from the
proof of~\cite[Proposition~1.5(a)]{Kriegeretal}.
The key ingredient of the proof is Siegel's Theorem for curves of genus 0
which can be made fully explicit using Baker's method
(see, for example, the end of~\cite[Theorem~4.3]{BakerBook}).

\begin{lemma}
    \label{lem:falphainSunits}
    Let $f(X) \in \o[X]$ be a polynomial with at least 3 distinct roots.
    Suppose that $\alpha \in \o$ satisfies
    \[
        f(\alpha) \in \o_\cS^*.
    \]
    Then
    \[
        h(\alpha) < \frac{\eta_1(\K,f,\cS)^{-1}}{\degf}
        + \left(1 + \frac{1}{\degf}\right) C_6(\K, f), 
    \]
    with $\eta_1(\K, f, \cS)$ as in \Cref{thm:SPartBoundNF}, $\degf = \deg f$
    and $C_6(\K,f)$ as in \Cref{lem:canonicalheights}.
\end{lemma}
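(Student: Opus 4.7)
The hypothesis $f(\alpha) \in \o_\cS^*$ means that $|f(\alpha)|_v = 1$ for every $v \in \MK \setminus \cS$, so in particular the contribution of such places to $h(f(\alpha)^{-1})$ vanishes. Therefore the plan is first to observe the identity
\[
    h(f(\alpha)) \;=\; h(f(\alpha)^{-1}) \;=\; h_{\cS}(f(\alpha)^{-1}),
\]
and the "complementary" height $h_{\MK \setminus \cS}(f(\alpha)^{-1})$ is $0$. This is the key structural simplification provided by the $\cS$-unit hypothesis.

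Next, I would feed this into \Cref{thm:SPartBoundNF}. Since $f$ has at least 3 distinct roots and $f(\alpha) \ne 0$ (because $f(\alpha)$ is a unit), the inequality \Cref{eq:SPartBoundGeneric} applies and gives
\[
    h(f(\alpha)) \;=\; h_\cS(f(\alpha)^{-1}) \;<\; (1 - \eta_1(\K,f,\cS))\bigl(h(f(\alpha)) + 1\bigr).
\]
Rearranging this inequality yields $\eta_1(\K,f,\cS)\,h(f(\alpha)) < 1 - \eta_1(\K,f,\cS) < 1$, hence
\[
    h(f(\alpha)) \;<\; \eta_1(\K,f,\cS)^{-1}.
\]

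The final step is to pass from a bound on $h(f(\alpha))$ to a bound on $h(\alpha)$, which is exactly what the canonical height supplies. Using \Cref{lem:canonicalheights}(a), (b) twice,
\[
    h(f(\alpha)) \;\ge\; \cht_f(f(\alpha)) - C_6(\K,f) \;=\; \degf\, \cht_f(\alpha) - C_6(\K,f) \;\ge\; \degf\, h(\alpha) - (\degf + 1) C_6(\K,f).
\]
Combining this with the previous bound and dividing by $\degf$ produces the claimed inequality
\[
    h(\alpha) \;<\; \frac{\eta_1(\K,f,\cS)^{-1}}{\degf} + \left(1 + \frac{1}{\degf}\right) C_6(\K,f).
\]
There is no real obstacle here; the proof is a direct, almost mechanical consequence of \Cref{thm:SPartBoundNF} once one uses that being an $\cS$-unit makes $h_{\MK \setminus \cS}(f(\alpha)^{-1}) = 0$. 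The only mild care needed is the final step of inverting the direction in \Cref{thm:SPartBoundNF} (which bounds $h_\cS$ by $(1-\eta_1)(h+1)$) to extract $h(f(\alpha)) < \eta_1^{-1}$, which works cleanly because $h_\cS(f(\alpha)^{-1})$ coincides with the whole of $h(f(\alpha))$ in this setup.
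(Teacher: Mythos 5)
Your proposal is correct and follows essentially the same route as the paper's proof: use the $\cS$-unit hypothesis to identify $h_\cS(f(\alpha)^{-1})$ with $h(f(\alpha))$, rearrange the inequality of \Cref{thm:SPartBoundNF} to get $h(f(\alpha)) < \eta_1(\K,f,\cS)^{-1}$, and then transfer the bound to $h(\alpha)$ via \Cref{lem:canonicalheights}. The explicit check that $f(\alpha)\neq 0$ (so the theorem applies) is a nice touch the paper leaves implicit.
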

\begin{proof}
    If $f(\alpha) \in \o_\cS^*$, then $h_\cS(f(\alpha)^{-1}) = h(f(\alpha)^{-1}) = h(f(\alpha))$.
    Substituting into \Cref{thm:SPartBoundNF} we obtain
    \begin{equation}
        \label{eq:falphainSunitsProof1}
        h(f(\alpha)) < \eta_1(\K,f,\cS)^{-1}. 
    \end{equation}
    By \Cref{lem:canonicalheights} we have the inequality
    \begin{equation} 
\begin{split}
        \label{eq:falphachtbound}
        h(f(\alpha)) &> \cht_f(f(\alpha)) - C_6(\K,f) = \degf\cht_f(\alpha) - C_6(\K,f) \\
                     &> \degf h(\alpha) - (\degf+1) C_6(\K,f).
\end{split}
\end{equation}
    The result now follows from substituting \Cref{eq:falphachtbound} into \Cref{eq:falphainSunitsProof1}.
\end{proof}

We now prove \Cref{thm:MultDepGeneralThm}.
\Cref{thm:MultDepGeneralThm} essentially follows from the proof of~\cite[Theorem~1.7]{BOSS}
(in the case where $\alpha \in R_{\cS_{f,\Gamma}}$) 
upon replacing the use of~\cite[Theorem~1.2]{BOSS} with \Cref{lem:falphainSunits}
and the use of~\cite[Theorem~1.3]{BOSS} with \Cref{thm:MultDepNorthcottNF}.  
We use the same cases as in the proof of~\cite[Theorem~1.7]{BOSS} and just indicate the changes necessary.

    As in~\cite{BOSS}, we can effectively bound the height of elements of $\PrePer(f)$ (see \Cref{lem:canonicalheights}~(a) and~(c)),
    hence we assume $\alpha \notin \PrePer(f)$ from now on.

    First, if $r = 0$ or $s = 0$, we then have that $f^{(n)}(\alpha) \in \o_\cS^*$ for some $n \geq 1$.
    \Cref{lem:falphainSunits} bounds the height of $f^{(n-1)}(\alpha)$.
    From this, \Cref{lem:canonicalheights} provides an effective upper bound on  $h(\alpha)$ as required.

    By replacing $(r,s)$ by $(-r,-s)$ we may assume that $r > 0$.

    If, in addition, $s < 0$, then, as in~\cite{BOSS}, we can conclude that $f^{(m)}(\alpha) \in \o_\cS^*$ and bound $h(\alpha)$ as above.

    If either $s \geq 2$ or $r \geq 2$, then the argument in~\cite{BOSS}
    applies directly
    (noting that as $\deg f \geq 3$, we can always apply one of~\cite[Theorem~2.1]{BEGBerczes} or~\cite[Theorem~2.2]{BEGBerczes} to obtain effective results).

    Finally, the case $r = s = 1$ is just a consequence of \Cref{thm:MultDepNorthcottNF}, which concludes the proof.

\section{Proof of \texorpdfstring{\Cref{thm:LowerBoundm}}{Theorem~\ref{thm:LowerBoundm}}}

\subsection{The case where \texorpdfstring{$m$}{m} is much larger than \texorpdfstring{$n$}{n}}

\begin{lemma}
    \label{lem:MultDepPrimeSupportDependsOnmn}
    Let $f(X)\in\o[X]$ be a polynomial with at least 3 distinct roots.
    Let $\alpha \in \o$, $m, n \in \Z$, $m > n \geq 0$ such that $f^{(m)}(\alpha), f^{(n)}(\alpha) \neq 0$.
    Let
    \[
        L = 
        \log^* \left(\frac{h\left(f^{(m)}(\alpha)\right)}{h\left(f^{(n)}(\alpha)\right) + 1}\right).
    \]
    Then
    \[
        \largestsupp\left(\frac{f^{(m)}(\alpha)}{f^{(n)}(\alpha)}\right)
        >
        C_8(\K, f)
        \frac{L \log^* L}{\log^* \log^* L},
    \]
    where $C_8(\K, f) > 0$ is an effectively computable constant.
\end{lemma}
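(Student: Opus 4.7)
The plan is to apply \Cref{thm:SPartBoundNF} to $f$ with input $\beta := f^{(m-1)}(\alpha) \in \o$ (so that $f(\beta) = f^{(m)}(\alpha) \neq 0$), choosing the set of places $\cS$ to consist of every archimedean place of $\K$ together with every non-archimedean place whose prime ideal has norm at most $Y := \largestsupp(f^{(m)}(\alpha)/f^{(n)}(\alpha))$. By the very definition of $Y$, every prime ideal $\mathbf{p}$ with $\Nm(\mathbf{p}) > Y$ satisfies $\ord_\mathbf{p}(f^{(m)}(\alpha)) \leq \ord_\mathbf{p}(f^{(n)}(\alpha))$, so a term-by-term comparison in the defining sum for $h_{\MK \setminus \cS}$ yields
\[
    h_{\MK \setminus \cS}(f^{(m)}(\alpha)^{-1}) \leq h_{\MK \setminus \cS}(f^{(n)}(\alpha)^{-1}) \leq h(f^{(n)}(\alpha)).
\]

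Next, \Cref{thm:SPartBoundNF} gives $h_\cS(f^{(m)}(\alpha)^{-1}) \leq (1-\eta)(h(f^{(m)}(\alpha)) + 1)$, where $\eta := \eta_2(\K, f, \cS)$ when $Y \geq 2$ and $\eta := \eta_1(\K, f, \cS)$ otherwise (in the degenerate case $Y = 1$, $\eta_1^{-1}$ is bounded by an effectively computable function of $\K$ and $f$ alone, so the conclusion of the lemma holds trivially). Since $f^{(m)}(\alpha) \in \o$, the identity
\[
    h(f^{(m)}(\alpha)) = h_\cS(f^{(m)}(\alpha)^{-1}) + h_{\MK \setminus \cS}(f^{(m)}(\alpha)^{-1})
\]
converts the upper bound into the matching lower bound $h_{\MK \setminus \cS}(f^{(m)}(\alpha)^{-1}) \geq \eta\, h(f^{(m)}(\alpha)) - 1$. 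Combining this with the upper bound from the first paragraph and taking logarithms gives
\[
    L \leq \log^* \eta^{-1} + C(\K, f)
\]
for an effectively computable $C(\K, f)$.

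The final step is to translate the bound on $\eta^{-1}$ into a lower bound on $Y$. An effective prime-counting estimate gives $t \leq c_9(\K) Y/\log Y$ for $Y$ sufficiently large; substituting this along with $P = Y$, the bound $\prod_{i=1}^t \log \Nm(\mathbf{p}_i)^\dD \leq (\log Y)^{t\dD}$, and $A_2(u,v) = (2048u)^v v^{3.5}$ into the formula for $\eta_2^{-1}$ shows that the dominant contribution to $\log \eta_2^{-1}$ is the factor $t \dD \log \log Y$, yielding $\log \eta_2^{-1} \leq c_{10}(\K, f) Y \log \log Y / \log Y$. Inverting the eventually monotone function $y \mapsto y \log \log y / \log y$, whose asymptotic inverse is $x \mapsto x \log x / \log \log x$, then delivers the claimed bound $Y \geq C_8(\K, f) L \log^* L / \log^* \log^* L$. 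The main obstacle is the asymptotic bookkeeping: verifying that, among the several contributions to $\log \eta_2^{-1}$ (from $A_2$, the polynomial factor in $t$, $P^\dD$, and the $\log$-norm product), the $t \dD \log \log Y$ term is dominant in the regime $t \sim Y / \log Y$, and then carrying out the inversion with effective constants that absorb the small-$Y$ and small-$L$ regimes uniformly.
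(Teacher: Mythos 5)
Your proposal is correct and follows essentially the same route as the paper's proof: you choose the same set of places $\cS^X$ cut off at $X = \largestsupp(f^{(m)}(\alpha)/f^{(n)}(\alpha))$, sandwich $h_{\MK\setminus\cS^X}(f^{(m)}(\alpha)^{-1})$ between the upper bound $h(f^{(n)}(\alpha))$ and the lower bound $\eta_2\, h(f^{(m)}(\alpha))-1$ coming from \Cref{thm:SPartBoundNF}, and then invert $Y \mapsto Y\log\log Y/\log Y$ after the same prime-counting estimate on $t_X$, with the same separate treatment of the $X=1$ case via $\eta_1$.
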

\begin{proof}
For any $X > 0$, let $\cS^X = \MK^{\infty} \cup \MK^{\leq X}$,
where
\[
    \MK^{\leq X} = \{\abs{\cdot}_{v_\mathbf{p}} \mid \Nm(\mathbf{p}) \leq X\}.
\]
If $X > 1$, then, since at most $d$ prime ideals lie over each prime $p \in \Z$,
using an explicit bound on the prime counting function~\cite[Theorem~4.6]{Apostol}
we derive
\begin{equation}
    \label{eq:sXtX}
    \begin{split}
    & t_X = \# \(\cS^X \cap \MK^0\) \leq 6dX/\log X, \\
    & s_X =  \# \cS^X \leq d + 6dX/\log X.
\end{split}
\end{equation}

Let
\[
    X = \largestsupp\left(f^{(m)}(\alpha) / f^{(n)}(\alpha)\right).
\]
Then $\left(f^{(n)}(\alpha) / f^{(m)}(\alpha)\right) \in \o_{\cS^X}$.
Therefore,
\begin{equation}
    \label{eq:PropProofhMSUpperBound}
\begin{split}
    h_{\MK \setminus \cS^X}(f^{(m)}(\alpha)^{-1})
    &\leq
    h_{\MK \setminus \cS^X}(f^{(n)}(\alpha)^{-1}) \\
    &\leq
    h(f^{(n)}(\alpha)^{-1}) = h(f^{(n)}(\alpha)).
\end{split}
\end{equation}
Suppose that $X > 1$, hence $t_X > 0$.
Then \Cref{thm:SPartBoundNF}
implies that
\begin{equation}
    \label{eq:PropProofhMSLowerBound}
    h_{\MK \setminus \cS^X}(f^{(m)}(\alpha)^{-1})
    > \eta_2(\K,f,\cS^X) \cdot h(f^{(m)}(\alpha)) - 1.
\end{equation}
Combining \Cref{eq:PropProofhMSUpperBound} and \Cref{eq:PropProofhMSLowerBound}
we obtain
\begin{equation}
\label{eq:HRatioBound1}
    \frac{h\left(f^{(m)}(\alpha)\right)}{h\left(f^{(n)}(\alpha)\right) + 1}
    <
    \eta_2(\K,f,\cS^X)^{-1}.
\end{equation}
We note that
\begin{equation}
    \label{eq:boundsWithXlogX}
\begin{split}
    & A_2(d \dD \fh_\L, t_X\dD) \leq C_9(\K, f)^{X/\log X}
\end{split}
\end{equation}
for some effectively computable constant $C_9(\K, f)$.

Substituting \Cref{eq:sXtX} and \Cref{eq:boundsWithXlogX}
into \Cref{eq:HRatioBound1}
we obtain
\[
    \frac{h\left(f^{(m)}(\alpha)\right)}{h\left(f^{(n)}(\alpha)\right) + 1}
    < \left(C_{10}(\K, f) \log^* X\right)^{C_{10}(\K, f) X/\log^* X},
\]
where $C_{10}(\K, f) > 0$ is effectively computable.
Taking logs, we obtain
\[
    \log^*\left(\frac{h\left(f^{(m)}(\alpha)\right)}{h\left(f^{(n)}(\alpha)\right) + 1}\right)
    < C_{11}(\K, f) X \frac{\log^* \log^* X}{\log^* X},
\]
where $C_{11}(\K, f) > 0$ is effectively computable.
The desired result follows after some simple calculation.

In the case where $X = 1$, the same procedure but using
\Cref{eq:SPartBoundGeneric} instead of \Cref{eq:SPartBound2}
shows that
\[
    \frac{h\left(f^{(m)}(\alpha)\right)}{h\left(f^{(n)}(\alpha)\right) + 1}
    <
    C_{12}(\K, f),
\]
where $C_{12}(\K,f)$ is an effectively computable constant, as required.
\end{proof}

\subsection{The case where \texorpdfstring{$m$}{m} and \texorpdfstring{$n$}{n} are of comparable sizes} 
\begin{lemma}
    \label{lem:MultDepPrimeSupport}
    Let $f(X)\in\o[X]$ be a polynomial with at least 3 distinct roots
    and for which $0$ is not periodic.
    Let $\alpha \in \o$, $m, n \in \Z$, $m > n \geq 0$ such that $f^{(m)}(\alpha)$, $f^{(n)}(\alpha) \neq 0$.
    Then
    \[
        \largestsupp\left(\frac{f^{(m)}(\alpha)}{f^{(n)}(\alpha)}\right)
        >
        C_{13}(\K, f)
        \frac{\loght\left(f^{(n)}(\alpha)\right) \log^* \loght\left(f^{(n)}(\alpha)\right)}{\log^* \log^* \loght\left(f^{(n)}(\alpha)\right)},
    \]
    where $C_{13}(\K,f) > 0$ is an effectively computable constant.
\end{lemma}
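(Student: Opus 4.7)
The plan is to adapt the argument of \Cref{lem:MultDepPrimeSupportDependsOnmn} by replacing the trivial upper bound $h_{\MK \setminus \cS^X}(f^{(m)}(\alpha)^{-1}) \leq h(f^{(n)}(\alpha))$ used there with a sharper bound depending only on $k = m-n$; this sharper bound will let a factor of $\degf^k$ cancel and thereby produce a dependence purely on $h(f^{(n)}(\alpha))$. As in the previous lemma, I set $X = \largestsupp(f^{(m)}(\alpha)/f^{(n)}(\alpha))$ and $\cS^X = \MK^\infty \cup \{\mathbf{p} : \Nm(\mathbf{p}) \leq X\}$, so that the definition of $X$ gives $\ord_\mathbf{p}(f^{(m)}(\alpha)) \leq \ord_\mathbf{p}(f^{(n)}(\alpha))$ for every finite $\mathbf{p} \notin \cS^X$.

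The key step is a valuation comparison. Writing $f^{(k)}(Y) = f^{(k)}(0) + Y g(Y)$ with $g \in \o[Y]$ and using $f^{(m)}(\alpha) = f^{(k)}(f^{(n)}(\alpha))$, if $v = \ord_\mathbf{p}(f^{(n)}(\alpha)) \geq 1$ for some $\mathbf{p} \notin \cS^X$, then $\ord_\mathbf{p}(f^{(n)}(\alpha) g(f^{(n)}(\alpha))) \geq v$, so the non-archimedean ultrametric inequality applied to $f^{(m)}(\alpha) = f^{(k)}(0) + f^{(n)}(\alpha) g(f^{(n)}(\alpha))$, combined with $\ord_\mathbf{p}(f^{(m)}(\alpha)) \geq v$, forces $\ord_\mathbf{p}(f^{(k)}(0)) \geq v$. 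Since $0$ is not periodic, $f^{(k)}(0) \neq 0$ for $k \geq 1$, so summing these valuation inequalities over the primes contributing to $h_{\MK \setminus \cS^X}(f^{(n)}(\alpha)^{-1})$ yields
\[
    h_{\MK \setminus \cS^X}(f^{(m)}(\alpha)^{-1}) \leq h_{\MK \setminus \cS^X}(f^{(n)}(\alpha)^{-1}) \leq h(f^{(k)}(0)) \leq C(\K, f)\, \degf^k,
\]
the last inequality following from \Cref{lem:canonicalheights} together with the fact that $\cht_f(0) \leq C_6(\K, f)$.

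Assuming $X > 1$ so that $\cS^X$ contains at least one finite place, I will apply \Cref{thm:SPartBoundNF} via \Cref{eq:SPartBound2} to $f^{(m)}(\alpha) = f(f^{(m-1)}(\alpha))$ to obtain the complementary lower bound $h_{\MK \setminus \cS^X}(f^{(m)}(\alpha)^{-1}) > \eta_2(\K, f, \cS^X)\, h(f^{(m)}(\alpha)) - 1$. Combining this with the upper bound above and with the canonical-height comparison $h(f^{(m)}(\alpha)) \geq \degf^k h(f^{(n)}(\alpha))/2$ (valid once $h(f^{(n)}(\alpha))$ exceeds an effective threshold, again by \Cref{lem:canonicalheights}), the factor of $\degf^k$ cancels to give
\[
    \eta_2(\K, f, \cS^X)^{-1} \geq C'(\K, f)\, h(f^{(n)}(\alpha)).
\]
Taking logarithms and inserting the estimate $\log \eta_2(\K, f, \cS^X)^{-1} \leq C''(\K, f)\, X \log^* \log^* X / \log^* X$, which is already established in the proof of \Cref{lem:MultDepPrimeSupportDependsOnmn} via \Cref{eq:sXtX} and \Cref{eq:boundsWithXlogX}, yields $X \log^* \log^* X / \log^* X \gtrsim \loght(f^{(n)}(\alpha))$; the same elementary inversion as at the end of that proof then converts this into the claimed lower bound on $X$.

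The main obstacle is the valuation comparison in the second paragraph: the sharpness of the resulting estimate $h_{\MK \setminus \cS^X}(f^{(n)}(\alpha)^{-1}) \leq h(f^{(k)}(0))$ is precisely what lets $\degf^k$ cancel, so that the final bound depends only on $h(f^{(n)}(\alpha))$ and not on $m-n$. The remaining edge cases are handled routinely: when $X = 1$, I will use the $\eta_1$ bound \Cref{eq:SPartBoundGeneric} in place of $\eta_2$ exactly as in \Cref{lem:MultDepPrimeSupportDependsOnmn}, which in that regime forces $h(f^{(n)}(\alpha))$ to be absolutely bounded; and when $\alpha$ is preperiodic or $h(f^{(n)}(\alpha))$ is otherwise below the threshold required for the canonical-height comparison, $\loght(f^{(n)}(\alpha))$ is bounded and the claim holds trivially by choosing $C_{13}(\K, f)$ sufficiently small.
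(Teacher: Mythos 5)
Your overall strategy is the right one and is essentially the paper's: the paper simply observes that $f^{(n)}(\alpha)/f^{(m)}(\alpha)\in\o_{\cS^X}$ puts the pair $(0,k)$ of iterates of $\beta=f^{(n)}(\alpha)$ exactly in the form of \Cref{eq:mnuIntegers}, applies \Cref{thm:MultDepNorthcottNF} with $\cS=\cS^X$ to get $h(f^{(n)}(\alpha))<c_2(\K,f)\,\eta_2(\K,f,\cS^X)^{-1}$, and then inverts as in \Cref{lem:MultDepPrimeSupportDependsOnmn}. What you do is unroll the proof of \Cref{thm:MultDepNorthcottNF} inline (the bound $h_{\MK\setminus\cS^X}(f^{(m)}(\alpha)^{-1})\le h(f^{(k)}(0))\le C\degf^{k}$, the lower bound from \Cref{thm:SPartBoundNF}, and the cancellation of $\degf^{k}$ via the canonical height); this is the same mechanism, just not packaged as a citation.

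However, the step you yourself flag as the main obstacle is wrong as written. You assert, for $\mathbf{p}\notin\cS^X$ with $v=\ord_{\mathbf{p}}(f^{(n)}(\alpha))\ge 1$, that $\ord_{\mathbf{p}}(f^{(m)}(\alpha))\ge v$; but the definition of $X$ gives exactly the opposite inequality $\ord_{\mathbf{p}}(f^{(m)}(\alpha))\le\ord_{\mathbf{p}}(f^{(n)}(\alpha))$ (as you correctly state one paragraph earlier), and consequently the middle inequality in your chain, $h_{\MK\setminus\cS^X}(f^{(n)}(\alpha)^{-1})\le h(f^{(k)}(0))$, is false in general: if $\ord_{\mathbf{p}}(f^{(k)}(0))=0$ while $v\ge 1$, then $\ord_{\mathbf{p}}(f^{(m)}(\alpha))=0$ and $\mathbf{p}$ contributes to $h_{\MK\setminus\cS^X}(f^{(n)}(\alpha)^{-1})$ but not to $h(f^{(k)}(0))$. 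The fix is to run the ultrametric argument on $w=\ord_{\mathbf{p}}(f^{(m)}(\alpha))$ instead: for $\mathbf{p}\notin\cS^X$ with $w>0$ one has $\ord_{\mathbf{p}}(f^{(n)}(\alpha))\ge w$, hence $\ord_{\mathbf{p}}\bigl(f^{(n)}(\alpha)\,g(f^{(n)}(\alpha))\bigr)\ge w$, and therefore $\ord_{\mathbf{p}}(f^{(k)}(0))=\ord_{\mathbf{p}}\bigl(f^{(m)}(\alpha)-f^{(n)}(\alpha)g(f^{(n)}(\alpha))\bigr)\ge w$. This yields directly the composite bound $h_{\MK\setminus\cS^X}(f^{(m)}(\alpha)^{-1})\le h(f^{(k)}(0))$, which is all that your subsequent cancellation of $\degf^{k}$ requires; with that correction the rest of your argument (the $X=1$ case, the threshold on $h(f^{(n)}(\alpha))$, and the final inversion) goes through and recovers the stated bound.
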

\begin{proof}

Define $\cS^X$ and $X$ as in the proof of \Cref{lem:MultDepPrimeSupportDependsOnmn}.

Then $\left(f^{(n)}(\alpha) / f^{(m)}(\alpha)\right) \in \o_{\cS^X}$.

If $X = 1$, hence $\cS^X = \MK^{\infty}$,
applying~\Cref{thm:MultDepNorthcottNF} with $\cS = \MK^{\infty}$
and $\alpha = f^{(n)}(\alpha)$
yields an effective upper bound on $h(f^{(n)}(\alpha))$
in terms of $\K$ and $f$, as required.

Otherwise, $\# (\cS^X \cap \MK^{0}) > 0$.
Hence, applying~\Cref{thm:MultDepNorthcottNF}
with $\cS = \cS^X$ and $\alpha = f^{(n)}(\alpha)$, we obtain
\begin{equation}
    \label{eq:hBoundWithSx}
    \begin{split}
        \frac{h(f^{(n)}(\alpha))}{c_2(\K,f)} <\ &
    \eta_2(\K,f,\cS^X)^{-1}.
\end{split}
\end{equation}
We can now proceed as in the proof of \Cref{lem:MultDepPrimeSupportDependsOnmn},
except with \Cref{eq:hBoundWithSx} in place of \Cref{eq:HRatioBound1},
to obtain the desired result.
\end{proof}

\subsection{Concluding the proof} 
We now prove \Cref{thm:LowerBoundm}.

If $h(f^{(n)}(\alpha)) > C_7(\K,f)$,
with $C_7(\K, f)$ as in \Cref{eq:hfnalphaIneqs},
then
a combination of \Cref{lem:MultDepPrimeSupportDependsOnmn},
used for 
\[
    m-n \geq \frac{ \lambda(f^{(n)}(\alpha))}{\log \degf},
\] 
and
of \Cref{lem:MultDepPrimeSupport} otherwise
implies the result.

Otherwise, $h(f^{(n)}(\alpha)) \leq C_7(\K,f)$.
The result now follows from \Cref{lem:MultDepPrimeSupportDependsOnmn}.

\section{Proof of \texorpdfstring{\Cref{thm:WeakZsigmondy}}{Theorem~\ref{thm:WeakZsigmondy}}}

If $\mathbf{p} \mid f^{(m)}(\alpha)$ and $\mathbf{p} \mid f^{(n)}(\alpha)$
with $m > n$,
then, writing
\[
    f^{(m)}(\alpha) = f^{(m-n)}\left(f^{(n)}(\alpha)\right),
\]
we see that $\mathbf{p} \mid f^{(m-n)}(0)$.

Fix a $k \in \Z$, $k > 0$ and
let $\cS_k$ be the finite set of places containing $\MK^\infty$
and all finite places corresponding to a prime dividing a value in the set
\[
    \{f^{(1)}(0), f^{(2)}(0), \cdots, f^{(k)}(0)\}
\]
($\cS_k$ is finite since $0$ is not periodic).

With the above observation, to prove the desired statement for
\[
    k(m, \alpha) = k,
\]
it suffices to show that
\[
    h_{\cS_k}(f^{(m)}(\alpha)^{-1}) < h(f^{(m)}(\alpha)).
\]

The case where $\cS_k$ contains no finite places is trivial.
Hence, we assume that $\cS_k$ contains at least one finite place.
Then, by \Cref{thm:SPartBoundNF}, we have that
\begin{equation}
    \label{eq:LowerBoundhdiff}
    h(f^{(m)}(\alpha)) - h_{\cS_k}(f^{(m)}(\alpha)^{-1}) > \eta_2(\K, f, \cS_k)h(f^{(m)}(\alpha)) - 1.
\end{equation}

We note the following inequalities for $\cS_k$ which are consequences of \Cref{lem:canonicalheights} and simple calculation
(for the last inequality note that
$\sum_{i=1}^t \log (N(\mathbf{p_i})) < C_{14}(\K, f) \degf^k$):
\[t <  C_{14}(\K, f) \degf^k, \qquad P < e^{  C_{14}(\K, f) \degf^k},  \qquad 
  \prod_{i=1}^t \log (N(\mathbf{p_i})) < e^{C_{14}(\K, f) \degf^k}
\]
for an effectively computable $C_{14}(\K, f)$. Hence,
\begin{equation}
    \label{eq:Proofeta2cSkBound}
    \eta_2(\K, f, \cS_k)^{-1} < e^{C_{15}(\K, f) \degf^k}.
\end{equation}
Substituting \Cref{eq:Proofeta2cSkBound}
into \Cref{eq:LowerBoundhdiff}, the required statement holds for any $k$ such that
\[
    \log h(f^{(m)}(\alpha)) >  C_{15}(\K, f) \degf^k.
\]

If $h(f^{(m)}(\alpha))$ is sufficiently large, then \Cref{thm:WeakZsigmondy} follows immediately.
Otherwise, $h(f^{(m)}(\alpha))$ is bounded and we may pick $c_6(\K, f)$
small enough such that $k(m, \alpha) = 0$.

\section*{Acknowledgement}

The authors are grateful to Attila B\' erczes for supplying a proof of a version of 
\Cref{thm:SPartBoundNF} in terms of the norm of the $\cS$-part of 
$f(\alpha)$ and Alina Ostafe for her encouragement and comments on an initial draft of the paper.

This work  was  supported, in part,  by the Australian Research Council Grant~DP180100201.

\end{document}